\theoremstyle{plain}
\newtheorem{theorem}{Theorem}[section]
\newtheorem{proposition}[theorem]{Proposition}
\newtheorem{lemma}[theorem]{Lemma}
\newtheorem{claim}[theorem]{Claim}
\newtheorem{question}[theorem]{Question}
\theoremstyle{definition}
\newtheorem{example}[theorem]{Example}
\title{Domination numbers and homotopy in certain ternary graphs}
\author{Taehyun Eom}
\address{
R\&BD FOUNDATIONS\\
Chonnam National University\\
Gwangju\\
Republic of Korea}
\email{teaheom@chonnam.ac.kr}
\author{Jinha Kim}
\address{Department of Mathematics\\
Chonnam National University\\
Gwangju\\
Republic of Korea 
\and Discrete Mathematics Group\\
Institute for Basic Science (IBS)\\
Daejeon\\
Republic of Korea}
\email{jinhakim@jnu.ac.kr}
\author{Minki Kim}
\address{Department of Mathematical Sciences\\
Gwangju Institute of Science and Technology\\
Gwangju\\
Republic of Korea }
\email{minkikim@gist.ac.kr}
\subjclass[2020]{05E45, 55U10}
\keywords{Independence complexes, Independent domination number, Ear decomposition, Homotopy type}
\date{\today}
\begin{document}
\maketitle
\begin{abstract}
A ternary graph is a graph with no induced cycles of length $0$ modulo $3$.
It was recently shown that, if the independence complex of a ternary graph is not contractible, then it is homotopy equivalent to a sphere.
When a ternary graph also does not contain induced cycles of length $1$ modulo $3$, we prove that the dimension of the sphere is equal to the dimension of a minimum maximal simplex of the independence complex, or equivalently, to the value obtained by subtracting $1$ from the independent domination number of the graph.
The same statement holds if we replace the independent domination number with the domination number.
We also give a hypergraph analogue of the statement above.
\end{abstract}

\section{Introduction}
\noindent We reveal a relation between domination numbers and the homotopy type of the independence complex of certain graphs, extending a known result for forests.
Throughout the paper, all graphs are simple and undirected.

Given a graph $G$ on $V$, a vertex set containing no adjacent pair is said to be {\em independent}. 
Being an independent set of $G$ is hereditary under taking a subset, so we can define the (abstract) simplicial complex on $V$ whose simplices are the independent sets of $G$.
It is called the {\em independence complex} of $G$, and we denote it by $I(G)$.

About a decade ago, Kalai and Meshulam~\cite{KM} posed a conjecture that the total Betti number of the independence complex of a graph with no induced cycles of length $0$ modulo $3$ is at most $1$.
Such a graph is called a {\em ternary graph}.
The above conjecture was recently shown by Zhang and Wu~\cite{WZ23}, and Kim~\cite{Kim22} proved a strengthened statement that says the independence complex of a ternary graph is either contractible or homotopy equivalent to a sphere.

For a ternary graph $G$, let 
\begin{align}\label{d(G)}
d(G)=\begin{cases}
    d & \text{if }I(G)\simeq S^{d}\text{ for some }d\geq0,\\
    * & \text{if }I(G)\text{ is contractible}.
\end{cases}    
\end{align}
It is natural to ask the following question.
\begin{question}
    Let $G$ is a ternary graph, what is $d(G)$?
\end{question}

We give an answer to this question for a subclass of ternary graphs in terms of domination numbers of the graphs.
Let $G$ be a graph on $V$.
For $A\subset V$ and $v \in V$, we say $A$ {\em dominates} $v$ if either $v\in A$ or $v$ has a neighbor of some vertex of $A$.
If $A$ dominates $V$, we also say $A$ {\em dominates} $G$ or $A$ is a {\em dominating set} of $G$.
The {\em domination number}, denoted by $\gamma(G)$, is the size of a minimum dominating set of $G$.
When a dominating set of $G$ is independent, we call it an {\em independent dominating set} of $G$.
Similarly as above, the {\em independent domination number} is the size of a minimum independent dominating set of $G$, and we denote it by $i(G)$.
We call $W \subseteq V(G)$ is an \textit{$i(G)$-dominating set} when $W$ is a minimum independent dominating set of $G$.
Obviously, we have $i(G) \geq \gamma(G)$ for all graphs.

When $G$ is a forest, it was already known by Ehrenborg and Hetyei~\cite{EH06} that the independence complex of $G$ is either contractible or homotopy equivalent to a sphere.
Then Marietti and Testa~\cite{MT08} determined $d(G)$ by showing that either $d(G)=*$ or $i(G)=d(G)+1=\gamma(G)$.
This result also implies that $I(G)$ is contractible whenever $G$ is a forest with $i(G)\neq \gamma(G)$.
We establish a result that takes a step beyond the work of Marietti and Testa, by providing a broader graph class with the same property.

When a graph $G$ is either a path or a cycle, we denote the number of its edges, or the length of $G$, by $\ell(G)$.
We say that a graph $G$ is {\em $(0,1)$-ternary} if it has no induced cycles of length $0$ or $1$ modulo $3$.
Actually, such graphs contain no cycles of length $0$ or $1$ modulo $3$.
\begin{proposition}
    $G$ is $(0,1)$-ternary if and only if it has no cycles of length $0$ or $1$ modulo $3$.
\end{proposition}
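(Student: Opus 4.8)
The plan is to prove the two implications separately. One direction is immediate: if $G$ has no cycle of length $0$ or $1$ modulo $3$ at all, then in particular it has no \emph{induced} such cycle, so $G$ is $(0,1)$-ternary. The substance lies in the converse, which I would prove in contrapositive form: assuming $G$ contains some (not necessarily induced) cycle whose length is $0$ or $1$ modulo $3$, I will produce an induced one, which shows that $G$ is not $(0,1)$-ternary.

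To carry this out, I would first choose, among all cycles of length $\equiv 0$ or $1 \pmod 3$, one cycle $C$ with the fewest vertices, and then claim that $C$ is induced. Suppose not; then $C$ has a chord $e$ joining two vertices that are non-adjacent along $C$. This chord partitions $C$ into two arcs and hence yields two cycles $C_1$ and $C_2$, each consisting of one arc together with $e$. Writing $\ell(C)=n$, $\ell(C_1)=a$, and $\ell(C_2)=b$, a direct count of edges gives $a+b=n+2$, since the two arcs together use all $n$ edges of $C$ while $e$ is counted once in each $C_i$. Moreover $a,b\geq 3$ and $a,b<n$ because $e$ is a genuine chord, so each arc contains at least two edges.

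The key step is the arithmetic. I would observe that the only residue class modulo $3$ that is \emph{not} forbidden is $2$, and that this class is closed under merging: if both $a\equiv 2$ and $b\equiv 2 \pmod 3$, then $n=a+b-2\equiv 2\pmod 3$, contradicting $n\equiv 0$ or $1 \pmod 3$. Hence at least one of $C_1,C_2$ has length $\equiv 0$ or $1 \pmod 3$; being strictly shorter than $C$, its existence contradicts the minimality of $C$. Therefore $C$ must be induced, and $C$ is the desired induced cycle of length $\equiv 0$ or $1 \pmod 3$, completing the contrapositive.

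The main obstacle, and really the only point requiring care, is making the chord-splitting bookkeeping precise: verifying that $e$ splits $C$ into exactly two cycles with $a+b=n+2$, and that both are genuine cycles (length at least $3$) that are strictly shorter than $C$. Once this is pinned down, the residue computation is routine and the minimal-counterexample framing closes the argument at once.
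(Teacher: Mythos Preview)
Your proof is correct and follows essentially the same approach as the paper: take a shortest cycle of length $0$ or $1$ modulo $3$, split along a hypothetical chord, and use $\ell(C_1)+\ell(C_2)=\ell(C)+2$ together with minimality to force both pieces into the residue class $2$, contradicting $\ell(C)\not\equiv 2$. The only cosmetic difference is that the paper phrases the contradiction in the opposite order (both subcycles must be $\equiv 2$ by minimality, hence $\ell(C)\equiv 2$).
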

\begin{proof}
    The if part is obvious.
    Suppose $G$ has no induced cycles of length $0$ or $1$ modulo $3$.
    Assume there is a cycle of length $0$ or $1$ modulo $3$ in $G$.
    Take one, say $C$, of minimum length among such.
    We claim that $C$ should be an induced cycle.
    Otherwise, $C$ has a chord $e$, and the endpoints of $e$ divides $C$ into two internally vertex-disjoint paths $P$ and $Q$.
    Since each $P+e$ and $Q+e$ are cycles, they should have length $2$ modulo $3$ by the minimality assumption on $C$.
    Then $\ell(C) = \ell(P+e)+\ell(Q+e)-2 \equiv 2 ~\text{(mod }3)$, which is a contradiction.
    Therefore, $G$ contains no cycles of length $0$ or $1$ modulo $3$.
\end{proof}

Our main result is the following.
\begin{theorem}\label{thm:main}
For every $(0,1)$-ternary graph $G$, if $d(G)\neq *$ then $$d(G)+1=i(G)=\gamma(G)=\gamma(L(G)).$$
\end{theorem}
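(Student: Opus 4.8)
The plan is to run a strong induction on $|V(G)|$ via the standard reductions for independence complexes, arranging that the forbidden cycle lengths force the topological and combinatorial quantities to descend together. As an isolated vertex makes $I(G)$ a cone, I may assume $G$ has no isolated vertex. Two moves drive the induction, and both are homotopy equivalences, so the issue is never $d(G)$ itself but whether the reductions move $i(G)$, $\gamma(G)$, and $\gamma(L(G))$ in the matching way. The first is the \emph{fold}: if $N_G(x)\subseteq N_G(y)$ for distinct $x,y$, then $I(G)\simeq I(G-y)$, and $G-y$ is again $(0,1)$-ternary as an induced subgraph; I use folds to pass to a \emph{reduced} graph in which no vertex is complex-dominated by another. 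The second is the \emph{leaf suspension}: if $v$ is a leaf with unique neighbor $u$, then $I(G)=I(G-v)\cup\st(v)$ writes $I(G)$ as a union of two contractible subcomplexes meeting in $I(G-N[u])$, whence $I(G)\simeq\Sigma I(G-N[u])$. When $d(G)\ne *$ this forces $I(G-N[u])\simeq S^{d(G)-1}$, so $G-N[u]$ is a smaller non-contractible $(0,1)$-ternary graph with $d(G)=d(G-N[u])+1$, and the induction has a target.

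The core is the equality $d(G)+1=i(G)$. Granting it, I assemble the other two equalities from the universal bound $\gamma(G)\le i(G)$ together with a cycle of inequalities; the main work is entirely in this topological identity. The crux is that a leaf suspension drops the minimum maximal independent set by exactly one, that is $i(G)=i(G-N[u])+1$. One inequality appends the leaf $v$ to a minimum maximal independent set of $G-N[u]$, the other restricts a minimum maximal independent set of $G$ back to $G-N[u]$; in both, the danger is a \emph{cross-edge} joining a vertex of $N(u)$ to a vertex of $G-N[u]$, which can destroy either maximality or domination. Here I expect to exploit that $I(G-N[u])\simeq S^{d-1}$ is non-contractible: a cross-edge that spoils the count should either create a fold inside $G-N[u]$, contradicting reducedness, or close up a cycle of length $0$ or $1$ modulo $3$. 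This is precisely where the hypothesis is indispensable, as the cycles $C_4$ and $C_7$ show: both are ternary but not $(0,1)$-ternary, and they have $d+1=1,2$ while $\gamma=2,3$, so the identity must fail the instant cycles of length $1$ modulo $3$ are admitted.

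The remaining and, I expect, hardest case is a reduced graph of minimum degree at least $2$, in which no leaf exists; the cycles $C_{3k+2}$ are the model examples. Here I would imitate the ear-decomposition argument underlying Kim's sphere theorem: isolate an induced path or handle whose deletion realizes a single suspension of $I(G)$, and whose length modulo $3$ is pinned down by the fact that the only admissible cycle lengths are $\equiv 2\pmod 3$. The goal is again to drop $d$, $i$, $\gamma$, and $\gamma(L(G))$ by the same amount, closing the induction. Matching the \emph{global} domination and matching optima to the local topological change across an ear, rather than at a single leaf, is the principal obstacle, since these optima need not restrict cleanly to the two sides of the ear.

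With $d(G)+1=i(G)$ in hand, the combinatorial equalities follow from closing the cycle
$$\gamma(G)\le i(G)=d(G)+1\le\gamma(L(G))\le\gamma(G).$$
Its two new links are a topological lower bound $d(G)+1\le\gamma(L(G))$ on the sphere dimension extracted from a minimum maximal matching, using the classical identification of $\gamma(L(G))$ with the edge domination number, the minimum size of a maximal matching; and a combinatorial bound $\gamma(L(G))\le\gamma(G)$ that holds under the $(0,1)$-ternary hypothesis. I stress that this last bound is false in general: already a star with two extra edges joining pairs of its leaves, which contains a triangle, satisfies $\gamma(L(G))>\gamma(G)$. Thus, like the topological identity above, it must be proved along the induction rather than in the abstract, and the $(0,1)$-ternary hypothesis is what makes all four quantities collapse to the single value $d(G)+1$.
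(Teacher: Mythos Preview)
Your closing chain of inequalities
\[
\gamma(G)\le i(G)\le d(G)+1\le \gamma(L(G))\le \gamma(G)
\]
is exactly the paper's architecture, including the H\'a--Woodroofe bound for $d(G)+1\le\gamma(L(G))$ and the need for a separate lemma $\gamma(L(G))\le\gamma(G)$ under the $(0,1)$-ternary hypothesis. Where you diverge from the paper, and where your argument has a genuine gap, is in the inequality $i(G)\le d(G)+1$.

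Your plan is to carry the \emph{exact} identity $i(G)=i(G-N[u])+1$ through a leaf suspension, and then to handle the leafless, minimum-degree-$2$ case by ``imitating the ear-decomposition argument.'' The first of these is already delicate: $i(G)\le i(G-N[u])+1$ is easy (adjoin $u$, not the leaf $v$, to a minimum independent dominating set of $G-N[u]$), but the reverse inequality can fail, because a vertex of $G-N[u]$ may be dominated in $G$ only through some $w\in N(u)\setminus\{v\}$ lying in your chosen $i(G)$-set. You flag this ``cross-edge'' obstruction but give no mechanism to rule it out. The second case you leave essentially blank, and this is precisely where almost all of the paper's work lives: for a graph like $C_5$, or any $2$-connected $(0,1)$-ternary graph, there is no leaf and no fold, and you have no concrete reduction.

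The paper avoids both difficulties by \emph{not} tracking $i$ exactly through a suspension. Instead it runs Mayer--Vietoris on the decomposition $I(G)=I(G-v)\cup I(G-N(v))$ with intersection $I(G-N[v])$, which reduces $\tilde H_j(I(G))=0$ for $j\le i(G)-2$ to the two one-sided bounds $i(G-N[v])\ge i(G)-1$ (trivial for every $v$) and $i(G-v)\ge i(G)$ for \emph{some} $v$. The existence of such a $v$ is the real theorem (the paper's Theorem~\ref{one for all}): it is obtained by passing to a leaf block of the block--cut tree and invoking a structural result (Theorem~\ref{all for one}) that in a $2$-connected $(0,1)$-ternary graph on $3m+2$ vertices one has $i(G)=m+1$ and $i(G-x)\ge i(G)$ for \emph{every} vertex $x$. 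That statement, proved via the ear-decomposition analysis of Section~\ref{sec:ear-decomp}, is the missing idea in your sketch; once you have it, the leaf/fold machinery is unnecessary.

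Finally, you defer $\gamma(L(G))\le\gamma(G)$ to ``along the induction,'' but the paper proves it as a standalone combinatorial lemma (Lemma~\ref{lem:upper}) with a nontrivial path-building argument that again uses the forbidden cycle lengths; this too is work you have not done.
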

The above theorem includes the result by Marietti and Testa, since every forest is $(0,1)$-ternary.
The proof of Theorem~\ref{thm:main} is based on the structure of $(0,1)$-ternary graphs, as restriction on the length of cycles makes the structure of such graphs quite simple.
Based on the structural result, we prove that $d(G)\geq i(G)-1$ and that $\gamma(L(G)) \leq \gamma(G) \leq i(G)$, where $L(G)$ is the line graph of $G$. It follows from \cite[Theorem~1.2]{HW14} that $\tilde{H}_i(I(G))=0$ for all $i \geq \gamma(L(G))$, 
so we have $d(G) \leq \gamma(L(G))-1$.
Then the inequality $\gamma(L(G)) \leq \gamma(G) \leq i(G)$ implies $d(G)+1=i(G)=\gamma(G)=\gamma(L(G))$.
From the argument, we see that the independence complex of a $(0,1)$-ternary graph $G$ is contractible whenever the values of $i(G)$, $\gamma(G)$, and $\gamma(L(G))$ do not match.

We also give a hypergraph analogue of Theorem~\ref{thm:main}.
A \textit{hypergraph} $H$ on a vertex set $V(H)$ is a collection of nonempty subsets of $V(H)$.
Each element of $H$ is called an \textit{edge} of $H$, and
an \textit{independent set} of $H$ is a a vertex subset that does not contain any edge as a subset. The \textit{independence complex} $I(H)$ of a hypergraph $H$ is a simplicial complex whose faces are the independent sets of $H$.
A \textit{Berge cycle} of length $k$ in a hypergraph $H$ is an alternating sequence $C=v_1 e_1 v_2 e_2 \ldots v_k e_k$ of distinct vertices and edges of $H$, where $v_i, v_{i+1} \in e_i$ for all $i$ taken modulo $k$.

For a simplicial complex $K,L$, we say $K$ and $L$ are \textit{homology equivalent} if $K$ and $L$ has the same homology group in all dimension, i.e. $\tilde{H}_i(K;\mathbb{Z})=\tilde{H}_i(L;\mathbb{Z})$ for all $i$.
It was shown in \cite{Kim24} that if a hypergraph $H$ has no Berge cycle of length $0$ modulo $3$, then $I(H)$ is homology equivalent to a contractible space or a sphere.
Hence, for a hypergraph $H$ with no Berge cycle of length $0$ modulo $3$, it is possible to define the following parameter.
\[d(H)=
\begin{cases}
    d & \text{ if } I(H) \text{ is homology equivalent to } S^{d}\text{ for some }d\geq0,\\ 
    * & \text{ if } I(H) \text{ is homology equivalent to a contractible space}.
\end{cases}\]
Note that the notation is consistent with \eqref{d(G)} when $H$ is a ternary graph because in that case, we already know that $I(H)$ is either contractible or homotopy equivalent to a sphere.

Given a hypergraph $H$, a vertex $v$ is dominated by a vertex set $W$ if $v\in W$ or $\{v\}\cup W'\in H$ for some $W'\subset W$.
If every vertex of $H$ is dominated by $W$, then we say $W$ is a {\em dominating set} of $H$.
The \textit{domination number} $\gamma(H)$ of $H$ is the minimum size of a dominating set of $H$.
We say a hypergraph $H$ is \textit{$(0,1)$-ternary} if $H$ has no Berge cycle of length $0$ or $1$ modulo $3$.
Then the following holds.
\begin{theorem}\label{thm:hypergraph}
    For every $(0,1)$-ternary hypergraph $H$, either $d(H)=*$ or $d(H)=\gamma(H)-1$.
\end{theorem}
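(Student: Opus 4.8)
The plan is to combine the dichotomy of \cite{Kim24} with two complementary homological estimates. Since \cite{Kim24} tells us that $I(H)$ is homology equivalent to either a point or a sphere $S^d$ for a unique $d$, it suffices to show that this $d$, when it exists, equals $\gamma(H)-1$. I would deduce this from two vanishing statements: $\tilde{H}_i(I(H);\mathbb{Z})=0$ for all $i\geq\gamma(H)$ (an upper bound, giving $d\leq\gamma(H)-1$), and $\tilde{H}_i(I(H);\mathbb{Z})=0$ for all $i\leq\gamma(H)-2$ (a connectivity bound, giving $d\geq\gamma(H)-1$). Because the homology of $I(H)$ is concentrated in a single degree, these two ranges leave only degree $\gamma(H)-1$, forcing $d=\gamma(H)-1$ whenever $I(H)$ is not homologically trivial.

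Both estimates I would prove by a single strong induction on $|V(H)|$, driven by the Mayer--Vietoris (cofiber) sequence attached to a well-chosen vertex $v$. Writing $I(H)=\operatorname{star}(v)\cup I(H-v)$ with $\operatorname{star}(v)\cap I(H-v)=\operatorname{link}_{I(H)}(v)$ and using that $\operatorname{star}(v)$ is a cone, hence contractible, one gets a long exact sequence relating $\tilde{H}_*(I(H))$ to $\tilde{H}_*(I(H-v))$ and $\tilde{H}_{*-1}(\operatorname{link}_{I(H)}(v))$. The link is itself the independence complex of an explicit hypergraph $H^v$ on $V(H)\setminus\{v\}$, whose edges are the edges of $H$ missing $v$ together with the sets $e\setminus\{v\}$ for edges $e\ni v$. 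The first technical lemma I would need is that the two operations $H\mapsto H-v$ and $H\mapsto H^v$ both preserve the class of $(0,1)$-ternary hypergraphs, i.e. that neither can create a Berge cycle of length $0$ or $1$ modulo $3$; this lets me apply the induction hypothesis and identify each of $I(H-v)$ and $I(H^v)$ as a homology point or homology sphere of the predicted dimension $\gamma(H-v)-1$ or $\gamma(H^v)-1$.

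The crux is the second technical ingredient: controlling how the domination number behaves under these two reductions. This is where the $(0,1)$-ternary hypothesis must enter, since in general vertex deletion can increase $\gamma$ arbitrarily and domination provides no lower bound on connectivity. I would extract from the restriction on Berge-cycle lengths a structural (ear-type) description of $(0,1)$-ternary hypergraphs and use it to select $v$---for instance an endpoint of a longest Berge path, or a vertex on a pendant ear---so that $\gamma(H-v)$ and $\gamma(H^v)$ differ from $\gamma(H)$ in a predictable way (typically $\gamma(H^v)=\gamma(H)-1$, with $H-v$ either matching $\gamma(H)$ or becoming homologically trivial). Plugging these values into the long exact sequence then confines the surviving homology of $I(H)$ to degree $\gamma(H)-1$ and simultaneously kills it below that degree.

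I expect the domination bookkeeping to be the main obstacle. The delicate point is arranging the choice of $v$ so that both reductions stay $(0,1)$-ternary and so that the connecting homomorphisms in Mayer--Vietoris neither destroy the top class nor introduce spurious classes in degrees $\leq\gamma(H)-2$; getting the three domination numbers to interlock correctly, rather than any individual homological computation, is the heart of the argument, exactly as the inequalities $\gamma(L(G))\leq\gamma(G)\leq i(G)$ carried the weight in the graph case.
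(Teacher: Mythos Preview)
Your route is genuinely different from the paper's, and the gap is real.

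The paper does \emph{not} run a Mayer--Vietoris induction directly on hypergraphs. Instead it reduces to the already-proved graph case via the edge-subdivision gadget $H\mapsto H_e$ of \cite{Kim24}: for each hyperedge $e$ one has $I(H_e)\simeq\Sigma I(H)$, one checks that $H_e$ stays $(0,1)$-ternary (Lemma~\ref{lem:hyperternary}), and the only new computation is the elementary equality $\gamma(H_e)=\gamma(H)+1$. Iterating over all edges of size $\geq 3$ turns $H$ into a $(0,1)$-ternary \emph{graph} $G$ with $d(G)=d(H)+k$ and $\gamma(G)=\gamma(H)+k$, and then Theorem~\ref{thm:main} finishes. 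All the structural work (ear decompositions, Theorem~\ref{all for one}, Lemma~\ref{lem:upper}) stays at the graph level; nothing analogous is redeveloped for hypergraphs.

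Your plan, by contrast, asks for a hypergraph analogue of that structural theory in order to \emph{select} the vertex $v$. That is exactly the part you leave as a black box (``extract \ldots\ a structural (ear-type) description of $(0,1)$-ternary hypergraphs''), and it is the part that took the paper two full sections to do even for graphs. Moreover, your scheme tries to obtain \emph{both} the upper and lower vanishing from the same Mayer--Vietoris sequence at a single $v$. For the lower bound you need $\gamma(H-v)\geq\gamma(H)$ (the hypergraph analogue of Theorem~\ref{one for all}); for the upper bound you simultaneously need $\gamma(H-v)\leq\gamma(H)$, since otherwise the inductively known class of $I(H-v)$ sits in degree $\gamma(H-v)-1\geq\gamma(H)$ and can feed into $\tilde H_{\geq\gamma(H)}(I(H))$. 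Nothing in the paper establishes $\gamma(H-v)=\gamma(H)$ even for graphs---the upper bound there comes from \cite{HW14} together with Lemma~\ref{lem:upper}, not from Mayer--Vietoris---so this is an additional obstacle your outline does not address. In short, the paper sidesteps the whole issue by suspending down to graphs; your approach would have to reprove Sections~\ref{sec:ear-decomp}--\ref{sec:i(G)} in the hypergraph setting \emph{and} find a substitute for the \cite{HW14} upper bound.
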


\subsection{Organization}
In Section~\ref{sec:ear-decomp}, we prove some structural properties of a $2$-connected graphs where all cycles have the same length modulo $m$ for some odd $m$. Then, in Section~\ref{sec:i(G)}, we give results on domination numbers of $(0,1)$-ternary graphs with a focus on the usage in the proof of Theorem~\ref{thm:main}. 
The proof of Theorem~\ref{thm:main} and Theorem~\ref{thm:hypergraph} will be presented in Section~\ref{sec:main}, and in Section~\ref{sec:remark}, we discuss what happens in arbitrary ternary graphs.

\section{Ear decomposition of $2$-connected graphs with restricted cycle lengths}\label{sec:ear-decomp}
A graph is {\em $2$-connected} if it has at least $3$ vertices and it does not contain a vertex whose removal disconnects the graph.
Let $k$ and $m$ be positive integers where $k<m$ and $m$ is odd.
Note that $2x \equiv k~\text{(mod }m)$ has a unique solution, say $k'$, up to modulo $m$.
In this section, we investigate some structural properties of $2$-connected graphs whose cycles have length $k$ modulo $m$.
We start with an observation on such graphs without the $2$-connected assumption.
\begin{lemma}\label{lem:cycles}
    Let $G$ be a graph such that every cycle of $G$ has length $k$ modulo $m$.
    Let $u$ and $v$ be two distinct vertices of $G$ and suppose that $G$ contains three internally vertex-disjoint paths $P_1$, $P_2$, $P_3$ of connecting $u$ and $v$.
    Then $G$ cannot have a path connecting an internal vertex of $P_i$ and an internal vertex of $P_j$ for any distinct $i,j \in \{1,2,3\}$.
\end{lemma}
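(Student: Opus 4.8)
The plan is to derive a contradiction from a short linear computation modulo $m$ on the lengths of a few carefully chosen cycles. Let $\ell_i$ denote the length of $P_i$. Since $m$ is odd, $2$ is invertible modulo $m$, and $k'$ is the unique residue with $2k'\equiv k\pmod m$. The first observation I would record is that the three arcs have the same length modulo $m$: the cycles $P_1\cup P_2$, $P_1\cup P_3$ and $P_2\cup P_3$ have lengths $\ell_1+\ell_2$, $\ell_1+\ell_3$ and $\ell_2+\ell_3$, each congruent to $k$, so subtracting these relations gives $\ell_1\equiv\ell_2\equiv\ell_3\pmod m$, and then $2\ell_i\equiv k$ forces $\ell_i\equiv k'\pmod m$ for every $i$. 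This is the one place where the oddness of $m$ is essential.

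Next I would suppose toward a contradiction that a connecting path exists; after relabelling I may assume it joins $P_1$ and $P_2$, and I would replace it by a path $Q$ that is internally disjoint from $P_1\cup P_2\cup P_3$ and joins an internal vertex $a$ of $P_1$ to an internal vertex $b$ of $P_2$. Let $q=|Q|$, let $\alpha_1,\beta_1$ be the lengths of the two subpaths into which $a$ splits $P_1$ (measured from $u$ and from $v$ respectively), and let $\alpha_2,\beta_2$ be the analogous quantities for $b$ on $P_2$, so that $\alpha_1+\beta_1=\ell_1$ and $\alpha_2+\beta_2=\ell_2$ are both $\equiv k'$. Because $Q$ is internally disjoint from the three arcs, each of the following four closed walks is a genuine (simple) cycle, hence has length $\equiv k\pmod m$:
\begin{align*}
\alpha_1+q+\alpha_2 &\equiv k, & \alpha_1+q+\beta_2+\ell_3 &\equiv k \pmod m,\\
\beta_1+q+\beta_2 &\equiv k, & \beta_1+q+\alpha_2+\ell_3 &\equiv k \pmod m.
\end{align*}
Subtracting the two top relations and using $\ell_3\equiv k'$ together with $\alpha_2+\beta_2\equiv k'$ yields $\alpha_2\equiv k'$; subtracting the two bottom relations in the same way yields $\alpha_2\equiv 0$. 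Hence $k'\equiv 0$, so $k\equiv 2k'\equiv 0\pmod m$, contradicting $1\le k<m$. The case in which the connecting path involves $P_3$ is identical after relabelling.

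The arithmetic in the last step is routine once the four cycles are known to be simple, so the step I expect to be the real obstacle is the reduction to a path $Q$ that is internally disjoint from $P_1\cup P_2\cup P_3$. The subtle point is that one must rule out a connection that runs through the branch vertices $u$ or $v$: a path reaching, say, $u$ from the interior of a single $P_i$ produces only relations consistent with $2k'\equiv k$ and yields no contradiction at all, so such connections genuinely occur and cannot be forbidden. Consequently the content of the lemma is precisely that no connection \emph{internally disjoint} from the three arcs can link two distinct interiors, and the argument must be set up so that $Q$ avoids the interiors of all three arcs (which can be arranged by choosing a shortest connecting path, since any return to an arc interior would shortcut $Q$) as well as the two branch vertices $u$ and $v$.
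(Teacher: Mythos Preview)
Your proof is correct and essentially the same as the paper's. The paper forms seven cycles: the three pairwise unions $P_i\cup P_j$ (your preliminary step computing $\ell_i\equiv k'$) together with exactly your four cycles through the extra path, and obtains the contradiction from the pair of identities $2\ell(Q)=\ell(C_1)+\ell(C_2)-\ell(C_4)-\ell(C_5)\equiv 0$ and $2\ell(Q)=\ell(C_6)+\ell(C_7)-\ell(C_3)\equiv k$; your bookkeeping is just a reshuffling of the same linear relations. Your closing remark about the implicit hypothesis that the connecting path be internally disjoint from $P_1\cup P_2\cup P_3$ is a fair point that the paper leaves to the figure, and your cycle computations are valid precisely under that reading.
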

\begin{proof}
    Suppose that there exists such a path $P$.
    Without loss of generality, assume that $P$ connects an internal vertex of $P_1$ and that of $P_2$.
    Consider the cycles $C_1$, $C_2$, $\ldots$, $C_7$ depicted in Figure~\ref{fig:lem_cycles}.
    \begin{figure}[htbp]
        \centering
        \includegraphics[scale=0.7]{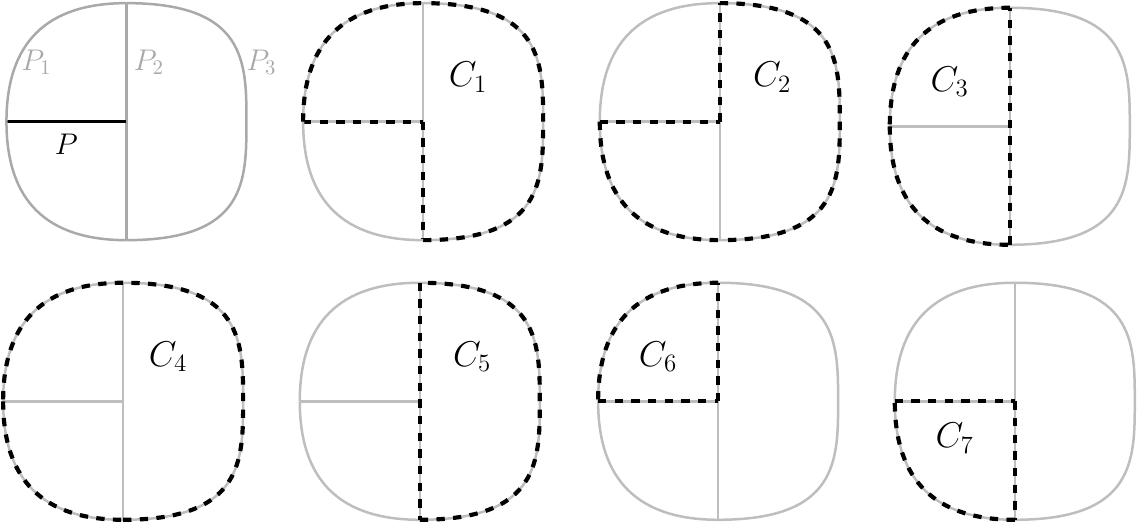}
        \caption{Cycles $C_1$, $C_2$, $\ldots$, $C_7$.}
        \label{fig:lem_cycles}
    \end{figure}
    
    By the assumption that every cycle in $G$ should have length $k$ modulo $m$, we have
    $$2\ell(P)=\ell(C_1)+\ell(C_2)-\ell(C_4)-\ell(C_5)\equiv0~\text{(mod }m).$$
    On the other hand, we also have
    $$2\ell(P)=\ell(C_6)+\ell(C_7)-\ell(C_3)\equiv k~\text{(mod }m).$$
    Since $k\neq0$, this is a contradiction.
    Therefore, there is no such path.
\end{proof}

Next, we show that if such a graph is $2$-connected, then any ear decomposition of the graph has a rooted tree-like structure.
\begin{theorem}\label{ear-decomp}
    Let $G$ be a $2$-connected graph where all cycles have length $k$ modulo $m$.
    Let $H$ be the graph obtained from $G$ by attaching a new ear $E$, preserving the restriction on the length of a cycle.
    Let $u$ and $v$ be the two endpoints of $E$.    
    Then 
    \begin{enumerate}
        \item any path connecting $u$ and $v$ in $H$ has length $k'$ modulo $m$, and
        \item $u$ and $v$ should be lying on the same ear of $G$.
    \end{enumerate}
\end{theorem}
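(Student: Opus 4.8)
The plan is to prove the two statements in order, using (1) as the main input for (2). For (1), I would fix an arbitrary $u$-$v$ path $P$ in $H$ and first observe that, since the interior vertices of the ear $E$ are new, each of them has degree $2$ in $H$; hence a simple path that enters the interior of $E$ is forced to traverse the whole of $E$, so every $u$-$v$ path in $H$ either equals $E$ or lies entirely inside $G$. It then suffices to control the lengths of $E$ and of $u$-$v$ paths in $G$. As $G$ is $2$-connected there are two internally vertex-disjoint $u$-$v$ paths $P_1,P_2$ in $G$; combining each with $E$ gives a cycle of $H$, so $\ell(E)+\ell(P_1)\equiv\ell(E)+\ell(P_2)\equiv k\pmod{m}$, whence $\ell(P_1)\equiv\ell(P_2)$. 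Since $P_1\cup P_2$ is itself a cycle we also get $\ell(P_1)+\ell(P_2)\equiv k\pmod{m}$, and because $m$ is odd these combine to $\ell(P_1)\equiv\ell(P_2)\equiv k'$ and then $\ell(E)\equiv k-k'\equiv k'\pmod{m}$ (using $2k'\equiv k$). The same cycle argument applied to any $u$-$v$ path inside $G$ shows it has length $\equiv k'$, which proves (1).

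For (2), I would fix an open ear decomposition $G_0\subset G_1\subset\dots\subset G_r=G$, with base cycle $G_0$, each $G_j$ $2$-connected, and $G_i=G_{i-1}\cup P_i$, and let $i$ be the least index with $u,v\in V(G_i)$. If $i=0$ then $u,v$ both lie on the base cycle and we are done, so assume $i\ge 1$; by minimality at least one endpoint, say $u$, is an interior vertex of $P_i$. If $v$ also lies on $P_i$ we are done, so suppose $v\in V(G_{i-1})$ is distinct from the two endpoints $a,b$ of $P_i$. Write $P_i=P_i^a\cup P_i^b$, where $P_i^a$ is the $u$-$a$ subpath and $P_i^b$ the $u$-$b$ subpath of the ear, and aim for a contradiction.

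The crux is that (1) pins down lengths modulo $m$ very rigidly. For any $a$-$v$ path $R$ in $G_{i-1}$, the concatenation $P_i^a\cup R$ is a $u$-$v$ path in $H$, so by (1) we have $\ell(P_i^a)+\ell(R)\equiv k'\pmod{m}$; thus every $a$-$v$ path in $G_{i-1}$ has the same length $k'-\ell(P_i^a)$ modulo $m$. Since $G_{i-1}$ is $2$-connected it contains two internally vertex-disjoint $a$-$v$ paths, whose union is a cycle of $G_{i-1}\subseteq H$; equating its length to $k$ and using that both paths have length $k'-\ell(P_i^a)$ gives $2\,(k'-\ell(P_i^a))\equiv k\equiv 2k'\pmod{m}$, so $\ell(P_i^a)\equiv 0\pmod{m}$ (here the oddness of $m$ is essential). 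The identical argument with $b$ in place of $a$ yields $\ell(P_i^b)\equiv 0$, hence $\ell(P_i)=\ell(P_i^a)+\ell(P_i^b)\equiv 0\pmod{m}$. But applying (1) to the ear $P_i$ attached to the $2$-connected graph $G_{i-1}$ (its cycles are cycles of $H$, so the hypothesis is inherited) gives $\ell(P_i)\equiv k'\pmod{m}$, and $k'\neq 0$ because $k\neq 0$; this contradiction forces $v$ onto $P_i$, establishing (2).

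The step I expect to be the main obstacle is precisely this last reduction. The natural instinct is to exhibit three internally vertex-disjoint paths between the endpoints $a,b$ of $P_i$ — the ear $P_i$ through $u$ together with two paths of $G_{i-1}$ — with $v$ lying on one of them, and then to invoke Lemma~\ref{lem:cycles} with the new ear $E$ as the forbidden connecting path. This stalls because $G$ is only $2$-connected, so there need not be a cycle through all three of $a,b,v$ (in a theta subgraph, three vertices on three distinct threads share no common cycle), and the required third path may simply fail to exist. The plan above sidesteps this entirely: instead of routing $v$ through both $a$ and $b$ at once, it uses only the two internally disjoint $a$-$v$ paths guaranteed by $2$-connectivity, and lets the rigidity from (1) force $\ell(P_i^a)\equiv 0$ and symmetrically $\ell(P_i^b)\equiv 0$. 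I would still verify the edge cases of the reduction carefully (when $v\in\{a,b\}$, when both endpoints are interior to $P_i$, and the base case $i=0$), but each of these lands immediately in the desired ``same ear'' conclusion.
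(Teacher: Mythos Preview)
Your argument is correct. Part (1) matches the paper's proof essentially verbatim. Part (2), however, takes a genuinely different route.

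The paper proves (2) by invoking Lemma~\ref{lem:cycles}: with $x,y$ the endpoints of the ear $P_{i(v)}$ containing the ``new'' endpoint (their $v$), it produces three internally vertex-disjoint $x$--$y$ paths, two coming from a cycle through $x,y$ in $G_{i(v)-1}$ and the third being $P_{i(v)}$ itself, arranged so that $u$ and $v$ are internal to two different ones; the new ear $E$ then plays the role of the forbidden connecting path in Lemma~\ref{lem:cycles}. This requires a case split according to whether $u$ already lies on the chosen cycle, and in the second case a further construction routing two $u$--$v$ paths onto that cycle.

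Your argument bypasses Lemma~\ref{lem:cycles} entirely. By concatenating $P_i^a$ with an arbitrary $a$--$v$ path in $G_{i-1}$ and applying (1), you pin down the length of every such path modulo $m$; two internally disjoint ones form a cycle, and equating lengths forces $\ell(P_i^a)\equiv 0$. The symmetric computation gives $\ell(P_i^b)\equiv 0$, whence $\ell(P_i)\equiv 0$, contradicting $\ell(P_i)\equiv k'\ne 0$ (which is just (1) applied one level down). This is cleaner and case-free, and makes the proof of Theorem~\ref{ear-decomp} independent of Lemma~\ref{lem:cycles}. The trade-off is architectural: the paper still needs Lemma~\ref{lem:cycles} for Lemma~\ref{min-cycles}, so routing (2) through it costs nothing extra there, whereas your approach would leave Lemma~\ref{lem:cycles} used only once. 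Your handling of the edge cases ($i=0$, $v\in\{a,b\}$, both endpoints interior to $P_i$) is fine; each collapses immediately as you say.
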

\begin{proof}
    Since $G$ is $2$-connected, there exist internally vertex-disjoint paths \(P'\) and \(P''\) connecting \(u\) and \(v\) in \(G\). Then \(P'\cup E\), \(P''\cup E\), and \(P'\cup P''\) are cycles of \(H\), so they all have length \(k\) modulo \(m\). 
    This particularly implies $\ell(E)\equiv \ell(P')\equiv \ell(P'') \equiv k'~\text{(mod }m)$.
    As $P'$ and $P''$ are arbitrarily chosen, it is shown that any path connecting \(u\) and \(v\) in \(H\) has length \(k'\) modulo \(m\).
    This proves (1).

    Now we prove (2).
    Let $P_0$ be the initial cycle in the ear decomposition of $G$, and let $P_1,\ldots,P_t$ be the ears in the order in which they are attached.
    Note that each $P_i, i>0$ are paths. Let \(G_0=P_0\), \(G_1=G_0\cup P_1\), $\ldots$, \(G_t=G_{t-1}\cup P_t=G\).
    For each vertex \(w\), let \(i(w)\) be the first index \(i\) that satisfies \(w\in V(G_{i})\). Then it must be \(w\in P_i\) and \(\deg_{G_i}(w)=2\). Assume without loss of generality that \(i(u)\leq i(v)\). We will show that either \(i(u)=i(v)\) or \(u\) is an endpoint of \(P_{i(v)}\).
    Suppose \(i(u)<i(v)\) and \(u\) is not an endpoint of \(P_{i(v)}\). Let $x$ and $y$ be endpoints of \(P_{i(v)}\). 
    
    Since $G_{i(v)-1}$ is $2$-connected, it has a cycle $C$ containing $x$ and $y$.
    Let $C^+$ and $C^-$ be the two internally disjoint paths connecting $x$ and $y$ whose union is $C$.
    If $u$ is a vertex of the cycle $C$, then $C^+$, $C^-$, and $P_{i(v)}$ are three internally disjoint paths connecting $x$ and $y$, and the argument for (1) shows that each of the paths has length $k'$ modulo $m$.
    However, this is a contradiction since the path $E$ cannot be added by Lemma~\ref{lem:cycles}.
    Otherwise, let $C'$ be the cycle obtained as the union of $C^+$ and $P_{i(v)}$.
    By the $2$-connectivity of $G_{i(v)}$, there exist two internally disjoint paths $A$ and $B$ from $u$ to $v$.
    Let $x'$ and $y'$ be the vertices where $A$ and $B$ meets $C'$, respectively.
    Here, since \(\deg_{G_{i(v)}}(v) = 2\) and its neighbors are contained in \(P_{i(v)}\), vertices \(x',y',v\) are all distinct.
    Now $C' \cup A \cup B$ can be decomposed into three internally disjoint paths $Q_1$, $Q_2$, $Q_3$ connecting $x'$ and $y'$ such that $u$, $v$ are internal vertices of $Q_1$, $Q_2$, respectively.
    However, this is again a contradiction by Lemma~\ref{lem:cycles}.
\end{proof}
Note that, by (1), the number of vertices in a $2$-connected $(0,1)$-ternary graph is $2$ modulo $3$.
Also, it was implicitly proved in (2) that no ear can connect an internal vertex of an ear to that of another ear.

Given a cycle $C$ and two distinct vertices $u$ and $v$ of $C$, let $C_{uv}^+$ and $C_{uv}^-$ be the two internally disjoint paths connecting $u$ and $v$ whose union is $C$.
We say a cycle $C$ is {\em nice} if there are two vertices $u$ and $v$ of $C$ such that all other vertices of $C$ have degree exactly $2$ in $G$, and the two paths $C_{uv}^+$ and $C_{uv}^-$ has length $k'$ modulo $m$.
\begin{lemma}\label{min-cycles}
Let $G$ be a $2$-connected graph that is not a cycle itself, where all cycles have length $k$ modulo $m$.
Then $G$ contains at least two nice cycles.
\end{lemma}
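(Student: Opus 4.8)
The plan is to pass to the multigraph $G^{*}$ obtained from $G$ by suppressing every vertex of degree $2$, so that the vertices of $G^{*}$ are exactly the branch vertices of $G$ (those of degree at least $3$) and the edges of $G^{*}$ are the maximal paths through degree-$2$ vertices, which I will call \emph{segments}. Since $G$ is $2$-connected and is not a cycle, $G^{*}$ is a $2$-connected multigraph on at least two vertices with minimum degree at least $3$. In this language a pair of parallel edges of $G^{*}$, i.e.\ two segments $S_{1},S_{2}$ joining the same branch vertices $u,v$, is precisely a cycle all of whose vertices other than $u,v$ have degree $2$; call such a pair a \emph{bigon}. Thus the lemma reduces to producing two distinct bigons in $G^{*}$, once I check that every bigon is actually nice.

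First I would verify that each bigon of $G^{*}$ is nice, i.e.\ that both of its arcs have length $k'$ modulo $m$. Given two parallel segments $S_{1},S_{2}$ between branch vertices $u$ and $v$, the conditions $\deg_{G}(u),\deg_{G}(v)\ge 3$ together with $2$-connectivity yield a third $u$--$v$ path $Q$ internally disjoint from $S_{1}$ and $S_{2}$: the interiors of $S_{1},S_{2}$ consist of degree-$2$ vertices, so the extra edges at $u$ cannot re-enter them, and a $\{u,v\}$-separation would contradict $2$-connectivity. The three cycles $S_{1}\cup S_{2}$, $S_{1}\cup Q$, $S_{2}\cup Q$ all have length $k$ modulo $m$, and solving the resulting congruences (using that $m$ is odd and $2k'\equiv k$) forces $\ell(S_{1})\equiv \ell(S_{2})\equiv k'\pmod m$. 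Hence every bigon is a nice cycle.

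Next I would show that $G^{*}$ is series-parallel, i.e.\ has no $K_{4}$-minor. Every subdivision of $K_{4}$ contains two vertices joined by three internally disjoint paths together with an edge joining interior vertices of two of them, and Lemma~\ref{lem:cycles} forbids exactly this configuration in $G$; so $G^{*}$ has no $K_{4}$-minor. To extract two bigons I would then use the decomposition of $G^{*}$ into its $3$-connected components (its SPQR-tree). Since $G^{*}$ is series-parallel there are no rigid $3$-connected components, so every component is a polygon (series) or a bond (parallel). A leaf component that is a polygon would have a vertex lying in no other component and hence of degree $2$ in $G^{*}$, contradicting $\delta(G^{*})\ge 3$; therefore every leaf of the decomposition tree is a bond, and a leaf bond has at least two non-virtual edges, i.e.\ contributes a bigon. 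If $G^{*}$ is a single bond it has at least three parallel edges and hence at least $\binom{3}{2}\ge 2$ bigons; otherwise the tree has at least two leaves, giving at least two bonds and hence at least two bigons. Either way $G^{*}$ has two bigons, producing two nice cycles of $G$.

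The main obstacle is this final step: obtaining \emph{two} nice cycles rather than one. A single nice cycle is easy to produce, but ``at least two'' is essentially the statement that the relevant block-decomposition tree has at least two leaves, which is why I route the count through the SPQR/block structure of $G^{*}$. A more self-contained route, staying inside the ear-decomposition language of the paper, would first prove (again via Lemma~\ref{lem:cycles}) that ears sharing a parent are pairwise non-crossing, so that the ear-arcs form a laminar family; one then argues that inclusion-minimal clean arcs give nice cycles and that at least two of them occur. I expect establishing that non-crossing property from Lemma~\ref{lem:cycles} and Theorem~\ref{ear-decomp} to be the technical crux of that alternative.
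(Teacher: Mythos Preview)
Your argument is correct and takes a genuinely different route from the paper's. The paper proves the lemma by induction on the length of an ear decomposition: assuming $G'=P_0\cup\cdots\cup P_{t-1}$ already has two nice cycles $C,D$, it uses Theorem~\ref{ear-decomp}(2) to pin down where the endpoints of the last ear $P_t$ can land relative to the four arcs $C_{uv}^\pm,D_{wz}^\pm$, and then checks case by case that two nice cycles survive in $G$. Your approach instead extracts a global structural fact---that the suppressed multigraph $G^{*}$ is $K_4$-minor-free, hence series-parallel---directly from Lemma~\ref{lem:cycles}, and then reads off two bigons from the Tutte/SPQR decomposition (leaf $S$-nodes are impossible since $\delta(G^{*})\ge 3$, so every leaf is a $P$-node contributing at least two real parallel edges). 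This is more conceptual and in fact explains \emph{why} the ear structure of Theorem~\ref{ear-decomp} is tree-like, at the cost of invoking outside machinery; the paper's induction is more elementary and stays entirely within the ear-decomposition language already set up. Two small points of precision: in your $K_4$ step the object forbidden by Lemma~\ref{lem:cycles} is a \emph{path} $P_{cd}$ (not merely an edge) joining interior vertices of two of the three $a$--$b$ paths, and you are implicitly using that this path is internally disjoint from those three paths, which is how Lemma~\ref{lem:cycles} is actually applied throughout the paper despite its looser wording.
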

\begin{proof}
We proceed by induction on the number of ears in an ear decomposition of $G$.
Consider an ear decomposition $G = P_0 \cup \cdots \cup P_t$ of $G$ where $P_0$ is the initial cycle and $P_1,\ldots,P_t$ are the ears in the order in which they are attached.
Since $G$ is not a cycle, we may assume $t \geq 1$.
If $t=1$, then $G$ is obtained by attaching the ear $P_1$ to the initial cycle $P_0$, and the statement obviously holds.
Thus we may assume $t>1$.

By the induction hypothesis, $G' = P_0\cup P_1 \cup \cdots\cup P_{t-1}$ contains two nice cycles, say $C$ and $D$.
Since $G$ is not a cycle itself and it is $2$-connected, each of $C$ and $D$ should contain exactly two vertices of degree bigger than $2$ in $G$.
Say those vertices are $u$ and $v$ for $C$ and $w$ and $z$ for $D$.
Note that, since $u$ and $v$ has degree bigger than $2$, the $2$-connectivity assumption on $G$ guarantees the existence of a path from $u$ to $v$ that is internally disjoint from $C_{uv}^+$ and $C_{uv}^-$.
Since every internal vertex of $C_{uv}^+$ has degree $2$ in $G$, removing all internal vertices of $C_{uv}^+$ from $G$ results in a $2$-connected graph such that all cycles have length $k$ modulo $m$.
The same observation holds for each of $C_{uv}^-$, $D_{wz}^+$ and $D_{wz}^-$, implying that there exists an ear decomposition such that $C_{uv}^+$, $C_{uv}^-$,
$D_{wz}^+$ and $D_{wz}^-$ are ears.

Let the endpoints of $P_t$ are $x$ and $y$.
If both $x$ and $y$ are not internal vertices of one of $C_{uv}^+$, $C_{uv}^-$, $D_{wz}^+$ and $D_{wz}^-$, then the cycles $C$ and $D$ are nice in $G$ as well.
Thus we may assume that at least one of $x$ and $y$ is an internal vertex of one of the paths $C_{uv}^+$, $C_{uv}^-$,
$D_{wz}^+$ and $D_{ez}^-$.
Without loss of generality, assume $x$ lies on $C_{uv}^+$.
By Theorem~\ref{ear-decomp}, the two endpoints of the ear $P_t$ should lie on the same ear in the new ear decomposition.
This implies that $y$ also lies on $C_{uv}^+$ as well. 
If $C_{uv}^+$ is not a part of $D$, then $D$ and $C' = \xymatrix{x\ar[r]^{C_{uv}^+} & y\ar[r]^{P_t} & x}$ are nice in $G$. Otherwise, $C_{uv}^+$ is a part of $D$, meaning that it is equal to one of $D_{wz}^+$ and $D_{wz}^-$. Without loss of generality, assume $C_{uv}^+=D_{wz}^-$.
Then the cycles \(C_1'=\xymatrix{x\ar[r]^{C_{uv}^{+}} & y\ar[r]^{P_t} & x}\) and \(C_2'=C_{uv}^- + D_{wz}^+\) are nice in $G$.
In any case, we found two nice cycles in $G$.
\end{proof}

\section{Domination numbers of $(0,1)$-ternary graphs}\label{sec:i(G)}
In this section, we show that every connected $(0,1)$-ternary graph contains a vertex whose deletion does not decrease the independent domination number.
\begin{theorem}\label{one for all}
    For every connected $(0,1)$-ternary graph $G$, there exists a vertex $v$ such that $i(G-v) \geq i(G)$.
\end{theorem}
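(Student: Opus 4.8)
The plan is to produce, for each connected $(0,1)$-ternary graph $G$ with at least two vertices, an explicit vertex $v$ together with a verification of $i(G-v)\ge i(G)$ via the following elementary criterion: if some minimum independent dominating set $S$ of $G-v$ happens to dominate $v$ in $G$ (that is, $N_G(v)\cap S\neq\emptyset$), then $S$ is already an independent dominating set of $G$, since it is independent in $G$ (because $v\notin S$ and the edges induced on $S$ are the same in $G$ and $G-v$) and it dominates every vertex of $G$. Hence $i(G)\le |S|=i(G-v)$, and it suffices to locate a vertex $v$ for which a minimum independent dominating set of $G-v$ is forced to, or can be arranged to, meet $N_G(v)$. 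I would first dispose of the case where $G$ has a leaf $\ell$ with unique neighbour $u$: taking $v=u$, the vertex $\ell$ becomes isolated in $G-u$, hence lies in \emph{every} independent dominating set of $G-u$, and $\ell\in N_G(u)$, so the criterion gives $i(G-u)\ge i(G)$. This already recovers the forest case.

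Assuming now that $G$ has minimum degree at least $2$, I would pass to a leaf block $B$ in the block--cut decomposition. Since no vertex has degree $1$, such a block cannot be a single edge, so $B$ is $2$-connected, and each of its cycles has length $2$ modulo $3$. If $G$ is itself a cycle $C_n$ (necessarily $n\equiv 2\pmod 3$), then deleting any vertex leaves the path $P_{n-1}$, and the identity $i(C_n)=\lceil n/3\rceil=\lceil (n-1)/3\rceil=i(P_{n-1})$ gives $i(G-v)=i(G)$. If $B$ is a cycle but $G\neq B$, then $B$ is a pendant cycle meeting the rest of $G$ at a single cut vertex $c$; here I take $v=c$. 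Writing $G_1$ for the subgraph induced on $V(G)\setminus V(B)$, we get a disjoint union $G-c=P_{n-1}\sqcup G_1$, so $i(G-c)=i(P_{n-1})+i(G_1)$. On the other hand, choosing a minimum independent dominating set $D_B$ of $B=C_n$ that avoids $c$ (a rotation of any minimum set) together with a minimum independent dominating set $D_1$ of $G_1$, the union $D_B\cup D_1$ is independent in $G$ (no interior vertex of $B$ has a neighbour outside $B$) and dominates $G$, whence $i(G)\le i(C_n)+i(G_1)=i(P_{n-1})+i(G_1)=i(G-c)$. The arithmetic $i(C_n)=i(P_{n-1})$ for $n\equiv 2\pmod 3$ is exactly what makes this balance.

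The remaining, and genuinely hardest, case is when the leaf block $B$ is $2$-connected but not a cycle. Here I would invoke Lemma~\ref{min-cycles} to fix a nice cycle $C$ of $B$, with its two degree-$\ge 3$ attachment vertices $p,q$ and its arcs $C_{pq}^+,C_{pq}^-$ of length $\equiv 1\pmod 3$ whose interiors consist solely of degree-$2$ vertices. The natural candidate is $v=p$: deleting $p$ ``unfolds'' the two arcs into pendant paths hanging at $q$, of lengths $\equiv 0\pmod 3$, and I would again attempt to build an independent dominating set of $G$ of size $i(G-p)$ by dominating these pendant paths internally (avoiding $p$ and $q$) and splicing with a minimum set of the rest of $G$. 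The main obstacle is that, unlike a pendant cycle, a nice cycle is attached at \emph{two} vertices, so deleting $p$ does not separate $G$ into a disjoint union: the vertex $q$ is shared between the unfolded arcs and the remainder of the graph, and the clean additivity used in the previous paragraph breaks down. I expect the crux to be precisely this bookkeeping at $q$: one must rule out, or repair by an exchange, the ``bad'' configuration in which a minimum independent dominating set of $G-p$ avoids all of $N_G(p)$ entirely. The arc lengths $\equiv 1\pmod 3$ (again mirroring $i(C_n)=i(P_{n-1})$) should power this repair, since shifting a dominating set along an arc frees exactly one vertex when needed, and the fact that Lemma~\ref{min-cycles} furnishes \emph{two} nice cycles provides the flexibility to select one whose attachment point behaves well.
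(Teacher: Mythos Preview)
Your easy cases (a leaf present; $G$ itself a cycle; the leaf block a pendant cycle) are handled correctly, and the overall architecture---work inside a leaf block of the block--cut tree and exploit that all its cycles have length $2$ modulo $3$---matches the paper's. The genuine gap is your final case, where the leaf block $B$ is $2$-connected and not a cycle: nothing is actually proved there. You locate a nice cycle via Lemma~\ref{min-cycles}, propose $v=p$, and then say that the ``bookkeeping at $q$'' is the obstacle and that the arc lengths ``should power this repair'', but no repair is carried out; the closing remark that the second nice cycle ``provides the flexibility to select one whose attachment point behaves well'' is a hope, not an argument. You neither show that some minimum independent dominating set of $G-p$ must meet $N_G(p)$, nor construct an independent dominating set of $G$ of size $i(G-p)$. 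When $G=B$ this \emph{is} the hard content of the theorem, restricted to the $2$-connected case; when $G\neq B$ your $p$ need not be the cut vertex of $B$, so you additionally lose the disjoint-union decomposition that made the pendant-cycle argument work, and your sketch says nothing about how the rest of $G$ interacts with the deletion.

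The paper closes exactly this gap by first establishing a much stronger statement for $2$-connected $(0,1)$-ternary graphs $H$ (Theorem~\ref{all for one}): $i(H-x)\ge i(H)$ holds for \emph{every} vertex $x$, and moreover for every $x$ there is an $i(H)$-dominating set containing $x$. This is proved by an involved induction on the ear decomposition, using the structure from Section~\ref{sec:ear-decomp} together with a dedicated rearrangement lemma (Lemma~\ref{rearrangement}) for minimum independent dominating sets. Only with Theorem~\ref{all for one} in hand does Theorem~\ref{one for all} follow in a few lines, and the paper always takes $v$ to be the cut vertex of a leaf block $H$: an $i(H)$-dominating set containing a neighbour of $v$ (hence avoiding $v$), combined with an $i(J)$-dominating set of $J=G-H$, witnesses $i(G)\le i(H)+i(J)$, while $i(G-v)=i(H-v)+i(J)\ge i(H)+i(J)$ by Theorem~\ref{all for one}. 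Your pendant-cycle paragraph is exactly this template in the special case $H=C_n$; the non-cycle case cannot be completed with Lemma~\ref{min-cycles} alone and needs the full force of Theorem~\ref{all for one}.
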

To see this, we investigate the behavior of $i(G)$-dominating set when $G$ is a $2$-connected $(0,1)$-ternary graph, based on the structure of such a graph $G$ described in Section~\ref{sec:ear-decomp}.
In the proof of Theorem~\ref{one for all}, we frequently use an obvious observation that $i(G)\geq i(G-v)+1$: if we take a minimum independent dominating set $I$ of $G-v$, then either $I$ also dominates $v$ in $G$ or $I\cup\{v\}$ is an independent dominating set of $G$. This holds for any graph.

Given a $2$-connected $(0,1)$-ternary graph $G$, let $\{u,v\} \subset V(G)$ be an {\it admissible pair} if one can obtain a $2$-connected $(0,1)$-ternary graph $H$ from $G$ by attaching an ear whose endpoints are precisely $u$ and $v$. Note that, by Theorem~\ref{ear-decomp}, two vertices forming an admissible pair must have distance $1$ modulo $3$.
Let {\em $i(G)$-dominating set} be a minimum independent dominating set of $G$.

\begin{theorem}\label{all for one}
    Let $G$ be a $2$-connected $(0,1)$-ternary graph on $3m+2$ vertices. Then 
    \begin{enumerate}
        \item $i(G) = m+1$,
        \item no $i(G)$-dominating set contains an admissible pair, and
        \item $i(G-x) \geq i(G)$ for all $x\in V(G)$.
    \end{enumerate}
    Moreover, for every $x\in V(G)$, there is an $i(G)$-dominating set containing $x$.
\end{theorem}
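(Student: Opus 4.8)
The plan is to prove the four assertions simultaneously by induction on the number of ears in an ear decomposition of $G$, peeling off one path of a nice cycle at each step. Since a $(0,1)$-ternary graph has all cycles of length $2$ modulo $3$, we are in the situation of Section~\ref{sec:ear-decomp} with modulus $3$, $k=2$, and $k'=1$; in particular every ear, and every path of a nice cycle, has length $1$ modulo $3$. In the base case $G=C_{3m+2}$ all four statements are elementary: an independent dominating set of a cycle is exactly a partition of its cyclic vertex sequence into arcs of length $2$ or $3$, and with $m+1$ arcs summing to $3m+2$ there is exactly one arc of length $2$. This gives $i(G)=m+1$ and shows every vertex lies in some such set; moreover all distances between chosen vertices are $0$ or $2$ modulo $3$, never $1$, so no admissible pair is contained; and $C_{3m+2}-x=P_{3m+1}$ has $i(P_{3m+1})=\lceil(3m+1)/3\rceil=m+1$, giving~(3).

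For the inductive step I would take a nice cycle $C$ with branch vertices $u,v$ from Lemma~\ref{min-cycles}, choose one of its two paths of length $3s+1$ with $s\ge 1$ (one exists because $G$ is simple), and form $G'=G-\operatorname{int}(C_{uv}^-)$ by deleting the $3s$ degree-$2$ interior vertices of that path. Then $G'$ is a $2$-connected $(0,1)$-ternary graph on $3(m-s)+2$ vertices with fewer ears, to which induction applies. The heart of the argument is the exact identity $i(G)=i(G')+s$. The bound $i(G)\le i(G')+s$ is straightforward: given a minimum independent dominating set $I'$ of $G'$, place a dominator at every third interior vertex of the deleted path; this adds $s$ vertices nonadjacent to $V(G')$ that cover all deleted vertices, and $u,v$ stay dominated by $I'$ through edges present in $G$ as well.

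The reverse inequality $i(G)\ge i(G')+s$ is the main obstacle. Starting from a minimum independent dominating set $I$ of $G$, the strictly interior vertices of the deleted path can be dominated only from within the path, forcing $|J|\ge s$ for $J=I\cap\operatorname{int}(C_{uv}^-)$. Deleting $J$ leaves every vertex of $G'$ dominated except possibly $u$ and $v$, and the whole difficulty is that the unique $I$-dominator of $u$ or of $v$ may lie in $J$. Simply re-inserting $u$ and $v$ overcounts and yields an off-by-one error; instead one must re-dominate the orphaned endpoints by locally shifting dominators along the deleted path or across the edges that leave $u,v$ into $G'$, exploiting that $u$ and $v$ are branch vertices of degree at least $3$. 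I expect proving that the surplus $|J|-s$ always suffices for this repair to be the crux, and I would control it by using the inductive forms of~(2) and the ``moreover'' clause on $G'$ to keep $I$ in a well-behaved position, together with the freedom—guaranteed by the \emph{two} nice cycles of Lemma~\ref{min-cycles}—to choose $C$ disjoint from any prescribed vertex.

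Granting the identity, the remaining parts transfer along the reduction. For the ``moreover'' clause, a minimum independent dominating set of $G'$ containing a prescribed vertex (by induction) extends by the $s$ path-dominators to one of $G$ containing it, interior vertices of $C$ being handled by sliding the dominator pattern; for~(3), choosing $C$ to avoid $x$ gives $i(G-x)=i(G'-x)+s\ge i(G')+s=i(G)$ via the inductive~(3); and for~(2), an admissible pair inside a minimum independent dominating set of $G$ would descend to one inside a minimum independent dominating set of $G'$, contradicting the inductive~(2). The one delicate verification here is that the reduction is compatible with admissibility and with the containment—precisely the kind of statement about pairs lying on a common ear that Theorem~\ref{ear-decomp} is designed to provide.
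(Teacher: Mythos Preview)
Your overall strategy---induct by peeling off one arc of a nice cycle---is exactly the paper's (the paper inducts on $m$ with base case $K_2$; your cycle base case is fine too). But two of your transfer steps, as written, do not go through.

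For (3) you assert $i(G-x)=i(G'-x)+s$ once $C$ avoids $x$. First, you can only arrange that $x$ is not an \emph{interior} vertex of $C$; the case $x\in\{u,v\}$ must be handled separately. More seriously, even when $x\notin V(C)$ the lower bound $i(G-x)\ge i(G'-x)+s$ would need the same case analysis as (1), and in the case $u'\notin I$ you would end up needing $i(G'-\{x,v\})\ge i(G'-x)$, which is \emph{not} part of the inductive package. The paper does not attempt this equality; it fixes an $i(G-x)$-dominating set $I$ and shows directly $|I|\ge i(G')+s=i(G)$, invoking inductive (3) on $G'$ (namely $i(G'-u)\ge i(G')$ and $i(G'-v)\ge i(G')$) at the key moments.

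For (2) your ``descent'' is too quick: $I\setminus V(P)$ need not dominate $G'$ (one of $u,v$ may be dominated only through $P$), and in the case $u',v'\in I$ one has $|I\cap V(P)|=p+1$, hence $|I\setminus V(P)|=i(G')-1$, which is too small to be an $i(G')$-dominating set. The paper fixes this with a separate rearrangement lemma: under hypotheses such as $v\in I$ or $N(v)\cap I=\{v'\}$, it modifies $I$ on $P$ (or on $Q$) without touching the rest so that the restriction to $G'$ \emph{does} become an $i(G')$-dominating set, and only then applies induction. Finally, note that the lower bound in (1) hinges on inductive (3) (if $u'\notin I$ then $I\setminus V(P)$ dominates $G'-v$, so $|I\setminus V(P)|\ge i(G'-v)\ge i(G')$), not on (2) or the ``moreover'' clause as you suggest.
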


Before we prove Theorem~\ref{all for one}, we first show how we obtain Theorem~\ref{one for all} from Theorem~\ref{all for one}.

In a graph $G$, a {\em $2$-connected component} is a maximal $2$-connected subgraph.
Here, we assume that $K_2$ is $2$-connected.
It is known that any connected graph decomposes into a tree of $2$-connected components, which is often called as the {\em block-cut tree} of $G$.
Unless $G$ is already $2$-connected, the $2$-connected component $H$ of $G$ that corresponds to a leaf of the block-cut tree contains exactly one vertex, say the \textit{cut vertex}, that is adjacent to a vertex not in $H$.
\begin{proof}[Proof of Theorem~\ref{one for all}]
    Let $H$ be a $2$-connected component of $G$ that corresponds to a leaf in the block-cut tree of $G$. Let $v$ be the unique cut vertex in $H$, and let $J=G-H$.
    By Theorem~\ref{all for one} we can take an $i(H)$-dominating set $A$ that contains a neighbor of $v$. This particularly implies $v \notin A$. Then for any $i(J)$-dominating set $B$, the set $A\cup B$ is an independent dominating set of $G$, and hence we have $i(G) \leq i(J) + i(H)$.
    Since $v$ is a cut vertex, it must be $i(G-v) = i(J) + i(H-v)$.
    Then, by Theorem~\ref{all for one}, we have $i(G-v) \geq i(J) + i(H) \geq i(G)$.
    Therefore, the vertex $v$ is a desired vertex.
\end{proof}

In the rest of this section, we give a proof of Theorem~\ref{all for one}.

We need two lemmas. The first one is an observation of what happens in paths.
\begin{lemma}\label{path-dominating}
    Let $P$ be a path on $n$ vertices.
    Then $i(P) = \left\lceil\frac{n}{3}\right\rceil$.
    Also, if there is an \(i(P)\)-dominating set $I$ containing two vertices of distance \(1\) modulo \(3\), then $n\equiv 1~\text{(mod }3)$ and $I$ does not contain any of the endpoints of~\(P\).
\end{lemma}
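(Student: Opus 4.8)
The plan is to analyze a minimum independent dominating set of a path through its \emph{gap structure}. Label the vertices of $P$ as $1, 2, \ldots, n$ in order along the path, and write a prospective set as $I = \{a_1 < a_2 < \cdots < a_s\}$.

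First I would pin down $i(P) = \lceil n/3 \rceil$. For the lower bound, each chosen vertex dominates only itself and its (at most two) neighbours, hence covers at most three vertices; covering all $n$ vertices therefore requires at least $\lceil n/3 \rceil$ of them. For the upper bound, the set of vertices congruent to $2$ modulo $3$, with the last element adjusted to reach the tail when $n \not\equiv 0 \pmod 3$, is independent and dominating of size exactly $\lceil n/3 \rceil$.

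Next I would record the constraints forced on an arbitrary $i(P)$-dominating set $I$. Independence gives each gap $g_j := a_{j+1} - a_j \ge 2$; domination of the vertices lying strictly between $a_j$ and $a_{j+1}$ forbids a gap of $4$ or more, since the middle vertex would then be undominated, so every $g_j \in \{2,3\}$. Domination of the two ends forces $a_1 \le 2$ and $a_s \ge n-1$, so the boundary gaps $b_0 := a_1 - 1$ and $b_1 := n - a_s$ both lie in $\{0,1\}$. Counting vertices along the path yields $n = 2s - 1 + b_0 + b_1 + t$, where $t$ is the number of $3$-gaps; substituting $s = \lceil n/3 \rceil$ shows that the number of $2$-gaps, namely $(s-1) - t = 3s - 2 - n + b_0 + b_1$, is at most $b_0 + b_1 \le 2$, and equals $2$ exactly when $n \equiv 1 \pmod 3$ and $b_0 = b_1 = 1$.

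The decisive observation is modular: a $2$-gap contributes $2$ and a $3$-gap contributes $0$ modulo $3$, so for $i < j$ the distance $a_j - a_i$ is congruent to $2$ times the number of $2$-gaps among $g_i, \ldots, g_{j-1}$. Hence $a_j - a_i \equiv 1 \pmod 3$ forces that block to contain a number of $2$-gaps congruent to $2$ modulo $3$, which in turn forces $I$ to have at least two $2$-gaps in total. By the count above this is possible only in the single case $n \equiv 1 \pmod 3$ with $b_0 = b_1 = 1$, and then $a_1 = 2$ and $a_s = n-1$, so neither endpoint $1$ nor $n$ belongs to $I$, which is exactly the claim. The main care needed is the bookkeeping that bounds the total number of $2$-gaps by $b_0 + b_1$, together with the step turning ``distance $\equiv 1 \pmod 3$'' into a lower bound on the $2$-gap count.
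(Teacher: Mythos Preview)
Your proof is correct and follows essentially the same gap-counting approach as the paper: both arguments record that consecutive differences lie in $\{2,3\}$, bound the boundary behaviour, and use the mod-$3$ contribution of each gap to force at least two $2$-gaps, which in turn pins down $n\equiv 1\pmod 3$ and excludes the endpoints. The only cosmetic difference is that the paper pads with phantom indices $i_0=-1$ and $i_{\lceil n/3\rceil+1}=n+2$ so that every gap (including the two boundary ones) lies in $\{2,3\}$, whereas you track the boundary gaps $b_0,b_1\in\{0,1\}$ separately; the two bookkeepings are equivalent, and your modular step is in fact spelled out more explicitly than the paper's.
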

\begin{proof}
    Let $P = v_1 v_2 \ldots v_n$.
    Clearly, $\{v_m: 1 \leq m \leq n, m\equiv 1~\text{(mod }3)\}$ is an independent set of size $\left\lceil\frac{n}{3}\right\rceil$ that dominates $P$.
    On the other hand, since every vertex of $P$ can dominate at most three vertices of $P$, we need at least $\left\lceil\frac{n}{3}\right\rceil$ vertices to dominate $P$.
    This shows that $i(P) = \left\lceil\frac{n}{3}\right\rceil$.

    Now suppose there is an $i(P)$-dominating set $I$ containing two vertices of distance $1$ modulo $3$.
    Let $i_1 < \cdots < i_{\lceil n/3 \rceil}$ be the indices of the vertices in $I$.
    Since $I$ is an independent set that dominates $P$, it must be $i_1\in\{1,2\}$, $i_{\lceil n/3 \rceil} \in \{n-1,n\}$, and $i_k - i_{k-1} \in \{2,3\}$ for all $2\leq k \leq \lceil n/3 \rceil$.
    In other words, if we set $i_0=-1$ and $i_{\lceil n/3 \rceil+1}=n+2$, then
    $$i_k - i_{k-1} \in \{2,3\}\;\;\forall 1\leq k \leq \left\lceil\frac{n}{3}\right\rceil+1.$$
    Since $I$ contains two vertices of distance $1$ modulo $3$, there are $2\leq k', k'' \leq \left\lceil \frac{n}{3} \right\rceil$ where $i_{k'}-i_{k'-1} = i_{k''}-i_{k''-1} = 2$. Then we have
    \begin{align*}
        (n+2)-(-1) &= \sum_{k=1}^{\lceil n/3 \rceil+1} (i_k-i_{k-1}) \leq 2\cdot2 + 3\cdot(\left\lceil\frac{n}{3}\right\rceil-1),
    \end{align*}
    that is, $n \leq 3\cdot\left\lceil\frac{n}{3}\right\rceil -2$.
    This particularly implies $n \equiv 1~\text{(mod }3)$.
    
    Observing that $n = 3\cdot\left\lceil\frac{n}{3}\right\rceil -2$ when $n \equiv 1~\text{(mod }3)$, it follows that $k'$ and $k''$ are the only values of $k$ where $i_k - i_{k-1} = 2$.
    That must happen between the two vertices of $I$ of distance $1$ modulo $3$, so it follows that $k', k'' \notin \{1, \left\lceil\frac{n}{3}\right\rceil\}$ and $i_1 = 2, i_{\left\lceil n/3 \right\rceil} = n-1$.
    Therefore, $I$ does not contain any endpoint of $P$.
\end{proof}

Given a path $P = v_1 \ldots v_n$, let $L_P = \{v_i\in P: i\equiv 1~\text{(mod }3)\}$, $M_P = \{v_i\in P: i\equiv 2~\text{(mod }3)\}$, and $R_P = \{v_i\in P: i\equiv 0~\text{(mod }3)\}$.
We denote by $uPv$ a path from a vertex $u$ to another vertex $v$ whose internal vertices form the path $P$.

In the second lemma, we introduce two modifications of an $i(G)$-dominating set under certain situations. Such modifications will be applied in the proof of Theorem~\ref{all for one}.
As usual, we denote the neighborhood and the closed neighborhood of a vertex $u$ in a graph $G$ by $N(u)=\{v\in V(G):uv\in E(G)\}$ and $N[u]=\{u\}\cup N(v)$, respectively.
\begin{lemma}\label{rearrangement}
    Let $G$ be a graph on $V$ and suppose $G$ contains a nice cycle $C=\xymatrix{u\ar[r]^{uPv} & v\ar[r]^{vQu} & u}$ where $|V(P)|=3p$, $|V(Q)|=3q$ and all vertices other than $u,v$ have degree exactly $2$ in $G$.
    Let $u'$ and $v'$ be the neighbors of $u$ and $v$ in $uPv$, respectively.
    If there is an $i(G)$-dominating set $I$ containing $u'$, then the following hold.
    \begin{enumerate}[(i)]
        \item If $v \in I$, then there is an $i(G)$-dominating set $I'$ where $I'\setminus V(Q) = I \setminus V(Q)$ and $I'\setminus V(P)$ dominates $G-P$.
        For convenience, when $V(P)=\varnothing$, we let $G-V(P)$ is the graph obtained from $G$ by deleting an edge connecting $u$ and $v$.
        \item If $N(v)\cap I = \{v'\}$, then there is an $i(G)$-dominating set $I'$ containing $u'$ and $v$ such that $I'\setminus(V(P)\cup\{v\}) = I\setminus(V(P)\cup\{v\})$.
        Note that if $V(P)=\varnothing$, $u' = v$ and $v' = u$, so this case does not happen.
    \end{enumerate}
\end{lemma}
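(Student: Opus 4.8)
The plan is to prove both parts by an explicit ``canonical replacement'' of $I$ supported on a single path, using crucially that every vertex of $C$ other than $u$ and $v$ has degree exactly $2$ in $G$. Write the two paths as $u\,x_1x_2\cdots x_{3p}\,v$ and $v\,y_1\cdots y_{3q}\,u$, so that $u'=x_1$ and $v'=x_{3p}$ whenever $p\geq 1$. The degree-$2$ condition means that each $x_i$ and each $y_j$ has both of its neighbours inside $C$, so the rest of $G$ meets $P$ and $Q$ only at $u$ and $v$. I will use two consequences repeatedly: the vertices of $P$ can be dominated only from within $\{u\}\cup V(P)\cup\{v\}$ (and similarly for $Q$), and replacing $I\cap V(P)$ or $I\cap V(Q)$ by any set dominating the same path can alter the domination of no vertex of $G$ except possibly $u$ and $v$. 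Hence throughout it suffices to track how $u$ and $v$ are dominated.

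For part (ii) we have $p\geq 1$ (the lemma already notes the case $V(P)=\varnothing$ cannot occur). Since $u'=x_1\in I$ we have $u\notin I$, and the hypothesis $N(v)\cap I=\{v'\}$ with $v'=x_{3p}$ forces $v\notin I$. Thus all of $x_1,\dots,x_{3p}$ must be dominated by $I\cap V(P)$ alone, so $I\cap V(P)$ is an independent dominating set of the path $P$ on $3p$ vertices that contains both endpoints. The gap-counting in the proof of Lemma~\ref{path-dominating} shows that the unique size-$p$ dominating set of such a path is $\{x_2,x_5,\dots,x_{3p-1}\}$, which avoids both endpoints; hence $|I\cap V(P)|\geq p+1$. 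I then set $I'=(I\setminus(V(P)\cup\{v\}))\cup\{x_1,x_4,\dots,x_{3p-2}\}\cup\{v\}$. This is independent ($\{x_1,x_4,\dots\}$ is independent, $v$ is non-adjacent to $x_{3p-2}$, and the only neighbour of $v$ in $I$, namely $x_{3p}$, has been discarded), it dominates $G$ (now $x_{3p}$ is dominated by $v$, $u$ by $x_1$, and $Q$ together with the rest of $G$ is untouched), and it contains $u'$ and $v$ while agreeing with $I$ off $V(P)\cup\{v\}$. Finally $|I'|=|I|-|I\cap V(P)|+(p+1)$, so the bound $|I\cap V(P)|\geq p+1$ combined with the minimality of $I$ (which $I'$ would otherwise violate) gives $|I\cap V(P)|=p+1$ and $|I'|=|I|$.

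Part (i) is dual, the extra point being that I modify $Q$ precisely so that $u$ survives the deletion of $P$. Here $v\in I$ dominates $y_1$ and $u\notin I$, so $y_2,\dots,y_{3q}$ must be dominated by $I\cap V(Q)$, giving $|I\cap V(Q)|\geq q$; minimality forces equality, because replacing $I\cap V(Q)$ by the pattern $\{y_3,y_6,\dots,y_{3q}\}$ never un-dominates $u$ (already dominated by $x_1\in I$). Taking $I'=(I\setminus V(Q))\cup\{y_3,y_6,\dots,y_{3q}\}$ yields an $i(G)$-dominating set agreeing with $I$ off $V(Q)$. The purpose of the pattern is that it contains $y_{3q}$, a neighbour of $u$: in $G-V(P)$ the vertex $u$ loses its neighbour $x_1$ but is now dominated by $y_{3q}\in I'\setminus V(P)$, while every other vertex of $G-V(P)$ keeps a dominator because deleting $V(P)$ removes no dominator of any vertex outside $\{u,v\}$. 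The degenerate case $V(P)=\varnothing$, in which $u'=v$ and $G-V(P)$ means deleting the edge $uv$, is handled verbatim by the same pattern on the nonempty $Q$.

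The step I expect to require the most care is the size bookkeeping, since the lemma is only useful because $I'$ is again a \emph{minimum} independent dominating set: I must verify $|I'|=|I|$ in each case, and this rests on the two exact counts $|I\cap V(P)|=p+1$ and $|I\cap V(Q)|=q$. These counts depend on using the hypotheses $u'\in I$ and $N(v)\cap I=\{v'\}$ together with the degree-$2$ structure, and on the refinement of Lemma~\ref{path-dominating} identifying which independent dominating sets of a path can contain prescribed endpoints. Keeping the independence and domination checks honest through the small cases $p,q\in\{0,1\}$ is the only other place where I would slow down.
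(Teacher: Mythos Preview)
Your proof is correct and follows essentially the same approach as the paper: in part~(i) you replace $I\cap V(Q)$ by $R_Q=\{y_3,y_6,\dots,y_{3q}\}$, and in part~(ii) you replace $I\cap V(P)$ by $L_P\cup\{v\}=\{x_1,x_4,\dots,x_{3p-2}\}\cup\{v\}$, exactly as the paper does. Your write-up is more explicit in verifying independence, domination, the size counts, and the degenerate cases, but the constructions and the underlying logic are identical.
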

\begin{proof}
    (i) This is obvious if $V(Q)=\varnothing$. Otherwise, let $V(Q)\cap N(v)=\{v''\}$. Since $v$ dominates $v''$ and $u\notin I$, there are $3q-1$ vertices of $Q$ that are not dominated by $I\setminus V(Q)$. 
    Thus it must be $|I\cap V(Q)| \geq q$.
    Noting that the vertices in $Q$ can be dominated by a subset of $\{u,v\}\cup V(Q)$ only, we see that $|I\cap V(Q)|$ equals to the independent domination number of a path $P_{3q-1}$, which is precisely $q$.
    This also can be achieved by replacing $I\cap V(Q)$ with $R_Q$.
    Then $I' = (I\setminus V(Q)) \cup R_Q$ is an $i(G)$-dominating set, and $I'\setminus V(P)$ dominates $G-P$.

    (ii) Observe that if $u',v'\in I$, then $|I\cap V(P)| \geq p+1$. One can see that the equality holds since the vertices in $P$ can be dominated by a subset of $\{u,v\}\cup V(P)$ only.
    Let $I'$ be the set obtained from $I$ by replacing $I\cap V(P)$ with $L_P\cup\{v\}$.
    Clearly, $I'$ dominates $G$ and $|I'|=i(G)$.
    Since $N(v)\cap I = \{v'\}$, $I'$ is independent.
    Therefore, $I'$ is the desired $i(G)$-dominating set.
\end{proof}

Now we are ready to prove Theorem~\ref{all for one}.
\begin{proof}[Proof of Theorem~\ref{all for one}]
    We proceed by induction on $m$.
    When $m=0$, $G$ consists of two vertices $u$ and $v$ that are adjacent to each other, it is clear that $i(G)=1$, no $i(G)$-dominating set contains an admissible pair, and $i(G-u)=i(G-v)=i(G)=1$.
    Suppose $m\geq 1$ and assume the statements hold for all $2$-connected $(0,1)$-ternary graphs on at most $3(m-1)+2$ vertices.

    By Theorem~\ref{ear-decomp} and Lemma~\ref{min-cycles}, there is a nice cycle $\xymatrix{u\ar[r]^{uPv} & v\ar[r]^{vQu} & u}$ where $u$ and $v$ have degree bigger than $2$ and $|P|=3p, |Q|=3q$ for some $p$ and $q$.
    Let $u'$ and $v'$ be the neighbors of $u$ and $v$ in $uPv$, respectively.

    \begin{enumerate}
        \item Since $G-P$ is a $2$-connected $(0,1)$-ternary graph on $3(m-p)+2$ vertices, the induction hypothesis says $i(G-P) = m-p+1$.
    Therefore, it suffices to show that $i(G) = i(G-P)+p$.

    First, we have $i(G) \leq i(G-P) + p$ by taking an $i(G-P)$-dominating set and adding $M_P$. On the other hand, consider an $i(G)$-dominating set $I$. If $I$ contains both $u'$ and $v'$, then it follows that $|I\cap V(P)| \geq p+1$. Since $I\setminus V(P)$ dominates $G-P-\{u,v\}$, it must be $|I\setminus V(P)| \geq i(G-P)-1$, and we have $$|I| = |I\setminus V(P)|+|I\cap V(P)| \geq i(G-P)+p.$$
    Otherwise, without loss of generality, we may assume $u' \notin I$. Then $I \setminus V(P)$ should dominate $G-P-v$, so $|I\setminus V(P)| \geq i(G-P-v) \geq i(G-P)$. Clearly, the $3p-2$ vertices of $P \setminus \{u',v'\}$ is not dominated by $I\setminus V(P)$, so $|I\cap V(P)| \geq p$. Therefore, it must be $|I| \geq i(G-P) + p$.
    In any case, we obtain $|I| = i(G-P)+p$.\hfill$\blacksquare$

    \item Let $I$ be an $i(G)$-dominating set.
    Since there is no admissible pair consists of a vertex in $P$ and a vertex not in $V(P)\cup\{u,v\}$, it is sufficient to check that both $I\setminus V(P)$ and $I\cap (V(P)\cup\{u,v\})$ does not contain an admissible pair.

    Suppose $u',v' \notin I$. Then $I\setminus V(P)$ should dominate $G-P$ and at least $3p-2$ many vertices of $P$ are not dominated by $I\setminus V(P)$. This implies $|I\setminus V(P)| = i(G-P) = m-p+1$ and $|I\cap V(P)| = p$.
    Clearly, $I\setminus V(P)$ contains no admissible pair by the induction hypothesis on $G-P$. This particularly implies that at most one of $u$ and $v$ can be included in $I$ since $\{u,v\}$ is an admissible pair.
    If both are not in $I$, then $I\cap (V(P)\cup\{u,v\})$ is equal to $M_P$, and it definitely does not contain an admissible pair.
    Otherwise, without loss of generality, assume $u\in I$. Let $P'$ be the path induced by $V(P) \cup \{u\}$. Then $I\cap (V(P)\cup\{u,v\})$ is an $i(P')$-dominating set containing $u$, so it does not contain an admissible pair.
    In any case, $I$ contains no admissible pair.

    Now, we may assume that at least one of $u'$ and $v'$ belongs to $I$. Without loss of generality, assume $v' \in I$.
    If $v'\in I$ and $u,u'\notin I$, then $I\setminus V(P)$ should dominate $G-P-v$, so $|I\setminus V(P)| \geq i(G-P-v) \geq i(G-P)$.
    However, since $v'$ dominates only $2$ vertices of $P$, $P$ still have $3p-2$ vertices that are not dominated by $(I \setminus V(P)) \cup \{v'\}$, and we need at least $p$ many vertices to dominate those.
    This implies $|I\cap V(P)| \geq p+1$, so we have $|I| \geq i(G-P)+p+1 = (m-p+1)+p+1 > m+1$, which is a contradiction.
    Therefore, it must be either $u \in I$ or $u' \in I$.

    Suppose $u\in I$. 
    By Lemma~\ref{rearrangement}~(i), we can find an $i(G)$-dominating set $I'$ that agrees with $I$ on $G-Q$ and $I'\setminus V(P)$ dominates $G-P$. By the induction hypothesis, $I \setminus V(Q) = I'\setminus V(Q)$ contains no admissible pair. Thus it suffices to show that $I \cap (V(Q)\cup\{u\})$ does not have any admissible pair. This follows from the fact that $I \cap (V(Q)\cup\{u\})$ is an $i(Q')$-dominating set where $Q'$ is the path induced by $V(Q)\cup\{u\}$ with length $0$ modulo $3$ .

    Finally, if $u'\in I$, then we first observe that $|I\cap V(P)| \geq p+1$.
    On the other hand, since $I\setminus V(P)$ dominates $G-P-\{u,v\}$, we have $|I\setminus V(P)|\geq i(G-P)-1$. As $|I| = i(G-P)+p$, it must be $|I\cap V(P)| = p+1$ and $|I\setminus V(P)|= i(G-P)-1$.
    Then by Lemma~\ref{path-dominating}, $I\cap V(P)$ does not contain an admissible pair. 
    Now if $N(u)\setminus \{u'\}$ meets $I$, then $I\setminus V(P)$ actually dominates $G-P-u$, implying that $|I\setminus V(P)| \geq i(G-P)$. This cannot happen, so we may assume that $N(u)\cap I = \{u'\}$. 
    By Lemma~\ref{rearrangement}~(ii), there is an $i(G)$-dominating set $I'$ containing $u$ that agrees with $I$ on $G-P-u$. This leads us to the previous case, implying that $I\setminus V(P)$ does not contain an admissible pair.
    This completes the proof of part (2).\hfill$\blacksquare$

    \item We may assume that $x \notin V(P)\cup V(Q)$. Otherwise, it must be either $x \in V(P)$ or $x \in V(Q)$. Assuming $x \in V(P)$ without loss of generality, we could start with another nice cycle that avoids $P$, so that $x$ is not a vertex of degree $2$ of the cycle.
    It is implied by Lemma~\ref{min-cycles} that such a cycle always exists.
    For contrary, suppose $i(G-x) \leq i(G)-1 = i(G-P)+p-1$. Let $I$ be an $i(G-x)$-dominating set.

    If $x = u$, then $I\cap V(P)$ should dominate $P-v'$, so $|I\cap V(P)| \geq p$. Since $I\setminus V(P)$ dominates $G-P-\{u,v\}$, we have $|I\setminus V(P)| \geq i(G-P-\{u,v\}) \geq i(G-P)-1$. By the assumption, it must be $|I\setminus V(P)| = i(G-P)-1$ and $|I\cap V(P)| = p$. The latter particularly implies that $v' \notin I$ by Lemma~\ref{path-dominating}, but then $I\setminus V(P)$ should dominate $G-P-u$. Therefore, we have $|I\setminus V(P)| \geq i(G-P-u) \geq i(G-P)$ by the induction hypothesis, but this is a contradiction. Thus we may assume $x \neq u$. By a symmetric argument, we may also assume $x \neq v$.

    Now, if $u',v' \notin I$, then we have $|I\setminus V(P)| \geq i(G-P)$ since $I\setminus V(P)$ should dominate $G-P-x$. 
    On the other hand, $V(P)\setminus\{u',v'\}$ still requires $p$ many vertices to be dominated, and this implies $i(G-x) \geq i(G-P)+p = i(G)$.
    If $u'\in I$ and $v'\notin I$, then we may assume $v \in I$. Otherwise, $V(P)$ requires $p+1$ many vertices to be dominated, and $I\setminus V(P)$ should dominate $G-P-x$, so by the induction hypothesis, we have $|I \setminus V(P)| \geq i(G-P-x) \geq i(G-P)$.
    Then $|I| \geq i(G-P) + p + 1 > i(G)$.
    Now, since $v\in I$, we can apply Lemma~\ref{rearrangement}~(i), to find an $i(G-x)$-dominating set $I'$ where $u'\in I'$, $v'\notin I'$, and $I'\setminus V(P)$ dominates $G-P-x$. By the induction hypothesis, we have $|I'\setminus V(P)| \geq i(G-P)$. Since $3p-1$ many vertices of $P$ are not dominated by $I'\setminus V(P)$, it must be $|I'\cap V(P)|\geq p$, so $|I|=|I'| \geq i(G-P)+p = i(G)$.
    
    Finally, suppose both $u'$ and $v'$ belong to $I$. Then we know $|I \cap V(P)| \geq p+1$ by Lemma~\ref{path-dominating}. 
    If $N(v) \cap I \neq \{v'\}$, then $I\setminus V(P)$ dominates $G-P-\{u,x\}$, so $|I\setminus V(P)|\geq i(G-P)-1$. It follows that $|I| \geq (p+1)+i(G-P)-1 = i(G)$.
    Otherwise, we have $N(v) \cap I = \{v'\}$, so we can apply Lemma~\ref{rearrangement}~(ii) to find an $i(G-x)$-dominating set $I'$ such that $u',v \in I'$. By Lemma~\ref{rearrangement}~(i), there is an $i(G-x)$-dominating set $I''$ where $I''\setminus V(P)$ dominates $G-P-x$. Similarly as above, this implies $|I|=|I''|\geq i(G)$.
    \hfill$\blacksquare$
    \end{enumerate}
    Finally, we show that for every $x\in V(G)$, there is an $i(G)$-dominating set containing $x$.
    If $x\notin P$ or $x\in M_P$, then we take the union of an $i(G-P)$-dominating set containing $x$ and $M_P$.
    If $x\in L_P$, then we take the union of $L_P$ and an $i(G-P)$-dominating set containing $v$.
    If $x\in R_P$, then we take the union of $R_P$ and $i(G-P)$-dominating set containing $u$.
    In any case, we constructed an independent dominating set of $G$ of size $i(G-P)+p=i(G)$.
    This completes the proof.
\end{proof}

Next, we give a lemma that compares the domination number of a $(0,1)$-ternary graph and that of the line graph of the graph.
\begin{lemma}\label{lem:upper}
If $G$ is $(0,1)$-ternary, then $\gamma(L(G)) \leq \gamma(G)$.
\end{lemma}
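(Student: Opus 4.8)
The plan is to translate the statement into the language of edge domination and then build a small edge-dominating set from a minimum dominating set of $G$. First I would record the elementary reformulation: a set $D$ of edges dominates $L(G)$ if and only if the set $V(D)$ of all endpoints of edges in $D$ is a vertex cover of $G$, since an edge $e=xy$ is dominated by $D$ exactly when $x$ or $y$ is an endpoint of some edge of $D$. Hence $\gamma(L(G))$ equals the least number of edges whose endpoints together cover $E(G)$. I would also reduce to the connected case: both $\gamma$ and $\gamma(L(-))$ are additive over connected components, and isolated vertices contribute nothing to $L(G)$ while contributing to $\gamma(G)$, so it suffices to treat connected $G$ with at least one edge.

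Next, fix a minimum dominating set $S$ with $|S|=\gamma(G)$ and set $B=V(G)\setminus S$. I would try to produce $D$ by choosing, for each $s\in S$, one incident edge; then $|D|\le|S|$ and $S\subseteq V(D)$ automatically, so every edge meeting $S$ is already covered. The only danger is an edge with both endpoints in $B$, and thus the task reduces to choosing the incident edges so that the selected $B$-endpoints form a vertex cover of the induced subgraph $G[B]$. Writing $F$ for the bipartite graph on $B\cup S$ that records the adjacencies of $G$, it suffices to find a vertex cover $T$ of $G[B]$ together with a matching of $F$ saturating $T$; the unused vertices of $S$ may then pick arbitrary incident edges.

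Here is where the $(0,1)$-ternary hypothesis enters. Such a graph has girth at least $5$, being both triangle-free and $C_4$-free. Triangle-freeness yields the crucial fact that any two adjacent vertices of $B$ have disjoint neighborhoods in $S$; equivalently, assigning to each $b\in B$ a dominator $f(b)\in N(b)\cap S$ gives a proper coloring of $G[B]$ by the elements of $S$, with each color class $f^{-1}(s)$ independent in $G[B]$. Using this, together with $C_4$-freeness (which makes $F$ itself $C_4$-free), I would verify Hall's condition for a suitable vertex cover $T$ of $G[B]$, for instance a minimum one, by arguing that a deficient set $X\subseteq B$ with $|N(X)\cap S|<|X|$ cannot be forced into $T$: the edges of $G[B]$ incident to $X$ can instead be covered from their endpoints lying outside $X$.

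I expect this last step, the verification of matchability (Hall's condition for the chosen cover), to be the main obstacle. The reformulation and the reduction to $G[B]$ are routine, and triangle-freeness delivers the proper coloring almost for free, but controlling the deficient sets of $F$ and showing that some vertex cover of $G[B]$ embeds into $|S|$ distinct dominators genuinely relies on the girth condition and is the technical heart of the lemma. As a fallback, should a direct Hall argument prove delicate, one can induct on the block-cut tree exactly as in the proof of Theorem~\ref{one for all}, reducing to the $2$-connected case and exploiting the nice-cycle structure furnished by Lemma~\ref{min-cycles} and Theorem~\ref{all for one}.
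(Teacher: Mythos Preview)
Your reformulation is sound: a set $D\subseteq E(G)$ dominates $L(G)$ if and only if $V(D)$ is a vertex cover of $G$, and the reduction to finding a vertex cover $T$ of $G[B]$ together with a matching of the bipartite graph $F$ saturating $T$ is a clean restatement of the goal. This is a genuinely different route from the paper, which argues by induction on $\gamma(G)$ and, assuming no suitable edge set can be peeled off, iteratively builds paths of length $0$ or $1$ modulo $3$ between dominating vertices (the Claim inside the proof) until some vertex is left undominated by $W$; that construction repeatedly manufactures cycles of arbitrarily large length $\equiv 0,1\pmod 3$, so it uses the full $(0,1)$-ternary hypothesis, not merely girth at least $5$.

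The gap is exactly the step you flag as the obstacle, and your sketched justification for it does not go through. You propose that a deficient set $X$ ``cannot be forced into $T$'' because its incident edges ``can instead be covered from their endpoints lying outside $X$''. That would require $X$ to be independent in $G[B]$, which fails already in tiny examples: on the path $s_2\,b'\,b\,s_1\,b''$ with $S=\{s_1,s_2\}$ one has $B=\{b,b',b''\}$, and $X=B$ is deficient ($|N_F(X)|=2<3$) while containing the edge $bb'$. So dropping all of $X$ and covering from outside is impossible in general; one must keep part of $X$ and argue that the \emph{retained} part still satisfies Hall, and you give no mechanism for that choice. Whatever fills this gap has to do real combinatorial work---plausibly of the same order as the paper's path-building claim---and will almost certainly need more of the hypothesis than the girth-$5$ consequence. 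The fallback you mention (block-cut tree plus nice cycles via Lemma~\ref{min-cycles} and Theorem~\ref{all for one}) points to statements about the independent domination number $i(G)$; it is not evident how those transfer to bounding $\gamma(L(G))$.
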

\begin{proof}
We prove by induction on $n=\gamma(G)$.
If $\gamma(G)=1$, then $G$ must be a star.
That is, $G$ has a vertex $v$ that is adjacent to all other vertices, and no pair of vertices other than $v$ are adjacent.
In this case, we obviously have $\gamma(L(G))=1=\gamma(G)$.
Assume $n>1$ and the statement holds for all $k<n$.
Take a minimum domination set $W$ of $G$.

If there is $w \in W$ that is isolated, then $\gamma(G) = \gamma(G-w)+1$. 
By the induction hypothesis, we have $\gamma(G)>\gamma(G-w) \geq \gamma(L(G-w)) = \gamma(L(G))$, where the last equality occurs since $w$ is isolated.
Thus we may assume every vertex in $W$ is incident with some edge.

Suppose that there exist nonempty sets $W'=\{w_1,\ldots,w_k\}\subset W$ and $F=\{e_1,\ldots,e_k\}$ such that $e_i$ is incident with $w_i$ for each $i \in [k]$ and $W\setminus W'$ dominates $G_F$, where $G_F$ is the graph obtained by removing all edges that meet an edge of $F$ and then removing all isolated vertices.
If $W' = W$, then there should be no vertices and edges in $G_F$.
In particular, it follows that $F$ dominates $L(G)$, so we obtain $\gamma(L(G)) \leq |F| = |W| = \gamma(G)$.
Otherwise, we have $\varnothing\neq W'\subsetneq W$.
Since $G_F$ is dominated by $W\setminus W'$, we have $\gamma(G_F)\leq |W|-|W'| = \gamma(G)-|W'|$.
Also, by the definition of $G_F$, it must be $\gamma(L(G)) \leq \gamma(L(G_F))+|F|$.
Since we know $\gamma(L(G_F)) \leq \gamma(G_F)$ by the induction hypothesis, it follows that $$\gamma(L(G)) \leq \gamma(L(G_F))+|F| \leq \gamma(G_F)+|W'| \leq \gamma(G).$$
Therefore, we may assume that for every choice of $W'=\{w_1,\ldots,w_k\}\subset W$ and $F=\{e_1,\ldots,e_k\}$ such that $e_i$ is incident with $w_i$, the graph $G_F$ contains a vertex $u$ that 
\begin{itemize}
    \item is not dominated by $W\setminus W'$,
    \item is incident to an edge $e=uv$ which is not dominated by $F$ in $L(G)$, i.e. $e=uv\in E(G_F)$, and
    \item is adjacent to some vertex in $W'$,
\end{itemize}
where the second and the third follow from the definition of $G_F$ and the assumption that $W$ dominates $G$, respectively.
Note that it must be $v\notin W\setminus W'$ because $u$ is not dominated by $W\setminus W'$. Indeed, this implies $v\notin W$ since the edge $e$ does not meet any edge in $F$.

Let $W=\{w_1,w_2,\ldots,w_n\}$.
Since $w_1$ is not an isolated vertex in $G$, there is an edge, say $e_1 = w_1 v_0$, that is incident to $w_1$.
Applying the above argument for $W'=\{w_1\}$ and $F=\{e_1\}$, there exists a vertex $u_1 \notin W$ that is not dominated by $W\setminus W'$ and $v_1 \notin W$ that is adjacent to $u_1$ such that $w_1u_1 \in E(G)$, $u_1 v_1\in E(G_F)$.
Since $G$ has no triangle, $v_1$ is not adjacent to $w_1$ and is not equal to $v_0$.
On the other hand, $v_1$ must have a neighbor in $W$.
Without loss of generality, assume $e_2=w_2v_1 \in E(G)$.
Then the path $w_1u_1v_1w_2$ has length $3$ and connects $w_1$ and $w_2$, where the third edge $v_1w_2$ belongs to $\{e_1,e_2\}$.
Since $G$ contains no cycles of length $0$ or $1$ modulo $3$, we can repeat such a process as in the following claim.
\begin{claim}\label{claim1}
For each integer $k \geq 1$, by relabeling the vertices in $W$ if necessary, there are $\{w_1,\ldots,w_k\}\subset W$, $\{e_1,\ldots,e_k\}\subset E(G)$, and vertices $v_0,\ldots,v_{k-1}$, $u_1,\ldots,u_{k-1}\in V(G)$ satisfying the following conditions:
\begin{enumerate}
\item $e_i=w_iv_{i-1}$ for each $i \in [k]$,
\item $u_i$ is dominated by $\{w_1,\ldots,w_i\}$ for each $i \in [k-1]$, 
\item $v_i$ is not dominated by $\{w_1,\ldots,w_i\}$ for each $i \in [k-1]$, 
\item $u_iv_i$ is not dominated by $\{e_1,\ldots,e_i\}$ in $L(G)$ for each $i \in [k-1]$,
\item for every $i\neq j$ in $[k]$, 
there is a path $P=p_0 p_1 \ldots p_{3m}$ from $w_i$ to $w_j$ such that 
$$\{p_{3t-3}p_{3t-2}:t\leq r\}\cup\{p_{3t+2}p_{3t+3}:t\geq r\} \subset \{e_1,\ldots,e_k\}$$ for some $0\leq r \leq m$, or there is a path $P=p_0 p_1 \ldots p_{3m-2}$ from $w_i$ to $w_j$ such that 
$$\{p_{3t-3}p_{3t-2}:t \leq m\} \subset \{e_1,\ldots,e_k\}.$$
\end{enumerate}
\end{claim}
\noindent
{\em Proof of Claim~\ref{claim1}.} 
We proceed by induction on $k$.
The above argument shows the base cases when $k=1$ and $k=2$.
Assume $k\geq2$ and suppose we have found $\{w_1,\ldots,w_k\}$, $\{e_1,\ldots,e_k\}$, $v_0,\ldots,v_{k-1}$, and $u_1,\ldots,u_{k-1}$ satisfying all the conditions.

Let $W' = \{w_1,\ldots,w_k\}$ and $F=\{e_1,\ldots,e_k\}$.
Similarly as before, there exists a vertex $u_k$ that is not dominated by $W\setminus W'$ but has a neighbor in $W'$, say $w_i$, and a vertex $v_k$ such that the edge $u_kv_k$ is not dominated by $F$ in $L(G)$.
We show that $v_k$ is not adjacent to any of $W'$.
Suppose $v_k$ is adjacent to $w_j\in W'$.
Since $G$ has no triangle, it must be $w_i \neq w_j$.
By the induction hypothesis, there is a path $P$ satisfying the condition (5). We divide into two cases.

\noindent {\bf Case 1.} Suppose $P$ has length $3m$.
Since the edge $u_k v_k$ is not dominated by $F$ in $L(G)$, if any of $u_k$ and $v_k$ is on $P$, then it should be equal either to $p_{3t-1}$ for some $t \leq r$ or to $p_{3t+1}$ for some $t \geq r$.
Note that $u_k, v_k \neq p_{3r}$ since $p_{3r}$ has distance $0$ modulo $3$ from both endpoints of $P$.
If $u_k, v_k \in V(P)$, then the distance between $u_k$ and $v_k$ on $P$ is either $0$ or $2$ modulo $3$.
However, by adding the edge $u_kv_k$ to the subpath of $P$ connecting $u_k$ and $v_k$, we obtain a cycle of length $1$ or $0$ modulo $3$, which is a contradiction.
This implies that at most one of $u_k$ and $v_k$ can lie on $P$.
Suppose $u_k \notin V(P)$ but $v_k \in V(P)$.
Since the distance between $w_i$ and $v_k$ on $P$ is either $2$ or $1$ modulo $3$, we obtain a cycle of length $1$ or $0$ modulo $3$ by adding the path $w_i u_k v_k$ to the subpath of $P$ connecting $w_i$ and $v_k$, so this is a contradiction.
Similarly, if $u_k\in V(P)$ and $v_k\notin V(P)$, we obtain a cycle of length $1$ or $0$ modulo $3$ by adding the path $w_j v_k u_k$ to the subpath of $P$ connecting $w_j$ and $u_k$.
This shows that $u_k, v_k \notin V(P)$.
Now, adding the path $w_i u_k v_k w_j$ to $P$ gives us a cycle of length $0$ modulo $3$, which is again a contradiction.

\noindent {\bf Case 2.} Suppose $P$ has length $3m-2$.
In this case, if any of $u_k$ and $v_k$ is on $P$, then it should be equal to $p_{3t-1}$ for some $t\leq m$.
If $u_k, v_k \in V(P)$, they have distance $0$ modulo $3$ on $P$, so adding $u_kv_k$ to the subpath of $P$ connecting $u_k$ and $v_k$ give us a cycle of length $1$ modulo $3$.
Observe that for every $t\leq m$, the vertex $p_{3t-1}$ has distance $2$ modulo $3$ from $p_0$ and $p_{3m-2}$ on $P$.
This implies that if either $u_k \notin V(P)$ and $v_k \in V(P)$ or $u_k \in V(P)$ and $v_k\notin V(P)$, then we obtain a cycle of length $1$ modulo $3$ by adding $w_i u_k v_k$ or $w_j v_k u_k$.
Finally, if $u_k,v_k \notin V(P)$, by adding the path $w_i u_k v_k w_j$ to $P$, we have a cycle of length $1$ modulo $3$.
In any case, there is a cycle of length $1$ modulo $3$, which is a contradiction.

Therefore, we conclude that $v_k$ does not have a neighbor in $W'$.
It follows that $v_k$ is adjacent to a vertex in $W\setminus W'$.
Without loss of generality, we may assume $w_{k+1}$ is adjacent to $v_k$. Let $e_{k+1} = w_{k+1}v_k$.
By the choice of $w_{k+1}, e_{k+1}, v_k$ and $u_k$, the conditions from (1) to (4) are already satisfied with $\{w_1,\ldots,w_{k+1}\}\subset W$, $\{e_1,\ldots,e_{k+1}\}\subset E(G)$, and $v_0,\ldots,v_{k}$, $u_1,\ldots,u_{k}\in V(G)$, so it is left to show (5).
Given $w_j \in W'$, we will combine the path $P$ from $w_j$ to $w_i$ that satisfies (5) with the path $w_i u_k v_k w_{k+1}$ to obtain a path from $w_j$ to $w_{k+1}$ satisfying the condition (5).
There are several cases as follows.

\noindent{\bf Case I.}
Suppose $P$ has length $3m$.
It was implicitly shown in Case 1 that both $u_k$ and $v_k$ cannot be vertices of $P$.
If $u_k, v_k \notin V(P)$, then we extend the path $P$ to obtain a path $P' = Pu_kv_kw_{k+1}$ of length $0$ modulo $3$ that connects $w_j$ and $w_{k+1}$.
Otherwise, either $u_k$ or $v_k$ should be a vertex of $P$.
Suppose $u_k\in V(P)$.
Then either $u_k = p_{3t-1}$ or $u_k = p_{3t+1}$ for some $t$.
For the latter case, $u_k$ has distance $2$ modulo $3$ from $w_i$, so the edge $u_k w_i$ completes a cycle of length $0$ modulo $3$.
Thus it must be $u_k=p_{3t-1}$ for some $t$.
Then we can construct the path $P'=p_0 p_1\ldots p_{3t-1}v_k w_{k+1}$ connecting $w_j$ and $w_{k+1}$.
See Figure~\ref{fig:Case I_1} for an illustration of $P'$ in this case.
\begin{figure}[htbp]
    \centering
    \includegraphics[scale=1]{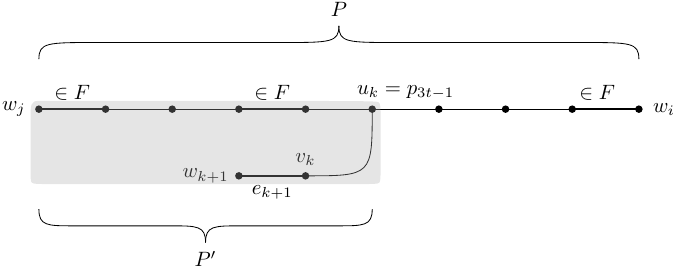}
    \caption{$P'=p_0 p_1\ldots p_{3t-1}v_k w_{k+1}$ when $P$ has length $3m$ and $u_k=p_{3t-1}$ for some $t$.}
    \label{fig:Case I_1}
\end{figure}
Now the only remaining case is when $v_k \in V(P)$ and $u_k\notin V(P)$.
If $v_k= p_{3t-1}$ or $v_k = p_{3t+1}$ for some $t$, then it has distance $1$ or $2$ modulo $3$ from $w_i$, so we can obtain a cycle of length $0$ or $1$ modulo $3$ by adding the path $w_i u_k v_k$ to the subpath of $P$ connecting $w_i$ and $v_k$.
Thus the only possiblity is that $v_k = p_{3r}$.
In this case, we construct $P'=p_0 p_1\ldots p_{3r}w_{k+1}$ of length $1$ modulo $3$ connecting $w_j$ and $w_{k+1}$.
See Figure~\ref{fig:Case I_2} for an illustration of $P'$ in this case.
\begin{figure}[htbp]
    \centering
    \includegraphics[scale=1]{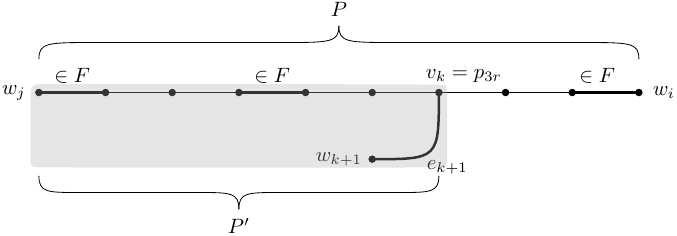}
    \caption{$P'=p_0 p_1\ldots p_{3r}w_{k+1}$ when $P$ has length $3m$ and $v_k=p_{3r}$.}
    \label{fig:Case I_2}
\end{figure}
In any case, since $v_kw_{k+1}=e_{k+1}$, the path $P'$ satisfies the condition (5).

\noindent{\bf Case II.}
Suppose $P$ has length $3m-2$.
If $u_k$ is on $P$, then it must be equal to $p_{3t-1}$ for some $t$. However, it has distance $2$ modulo $3$ from $w_i$, so the edge $u_kw_i$ completes a cycle of length $0$ modulo $3$.
Thus it must be $u_k \notin V(P)$.
In this case, it was shown in Case 2 that $v_k$ also cannot be a vertex of $P$. Therefore, we can extend the path $P$ to obtain a path $P' = Pu_kv_kw_{k+1}$ of length $1$ modulo $3$ that connects $w_j$ and $w_{k+1}$.
Since $v_kw_{k+1}=e_{k+1}$, the path $P'$ satisfies the condition (5).

For all cases, we could find a path from $w_j$ to $w_{k+1}$ satisfying the condition (5).
This completes the proof. \hfill$\blacksquare$

\medskip

Applying the above argument for $k=|W|$, we see that there exists a vertex $v$ that is not dominated by $W$.
This is a contradiction to the assumption that $W$ dominates $G$.
What we have shown is that there exist nonempty sets $W'=\{w_1,\ldots,w_k\}\subset W$ and $F=\{e_1,\ldots,e_k\}$ such that $e_i$ is incident with $w_i$ for each $i \in [k]$ and $W\setminus W'$ dominates $G_F$, and it is already shown that $\gamma(L(G))\leq\gamma(G)$.
This completes the proof.
\end{proof}

\section{Independence complex of $(0,1)$-ternary graphs and hypergraphs}\label{sec:main}
In this section, we prove the main theorems. 
We start with the graph version.
\begingroup
\def\thetheorem{\ref{thm:main}}
\begin{theorem}
For every $(0,1)$-ternary graph $G$, if $d(G)\neq *$ then $$d(G)+1=i(G)=\gamma(G)=\gamma(L(G)).$$
\end{theorem}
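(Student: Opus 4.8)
The plan is to deduce the four-term equality from the chain of inequalities
\[ i(G)-1 \;\le\; d(G) \;\le\; \gamma(L(G))-1 \;\le\; \gamma(G)-1 \;\le\; i(G)-1, \]
which collapses every inequality to an equality. Three of the four are immediate. Since every independent dominating set is a dominating set, $\gamma(G)\le i(G)$; Lemma~\ref{lem:upper} gives $\gamma(L(G))\le\gamma(G)$; and as $I(G)\simeq S^{d(G)}$ carries nonzero reduced homology in degree $d(G)$, the vanishing $\tilde{H}_i(I(G))=0$ for all $i\ge\gamma(L(G))$ coming from \cite[Theorem~1.2]{HW14} forces $d(G)\le\gamma(L(G))-1$. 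Thus the whole theorem reduces to the single lower bound $d(G)\ge i(G)-1$, i.e.\ the sphere dimension is at least the dimension $i(G)-1$ of a smallest facet of $I(G)$.

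I would prove $d(G)\ge i(G)-1$ by induction on $|V(G)|$, in the form: for every $(0,1)$-ternary graph $G$ with $I(G)\not\simeq\ast$ one has $\tilde{H}_j(I(G))=0$ for all $j\le i(G)-2$. Because $I(G)$ is contractible or a sphere by \cite{Kim22}, this acyclicity below degree $i(G)-1$ together with non-contractibility forces $I(G)\simeq S^d$ with $d\ge i(G)-1$. If $G$ is disconnected, say $G=G_1\sqcup G_2$, then $I(G)=I(G_1)\ast I(G_2)$; a non-contractible join requires both factors to be spheres, and $S^{d_1}\ast S^{d_2}=S^{d_1+d_2+1}$ together with $i(G)=i(G_1)+i(G_2)$ and the inductive bounds $d_\ell\ge i(G_\ell)-1$ yields $d(G)=d_1+d_2+1\ge i(G)-1$.

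For connected $G$, Theorem~\ref{one for all} furnishes a vertex $v$ with $i(G-v)\ge i(G)$. I would then use the standard splitting of the independence complex at $v$: $I(G)$ is the mapping cone of the inclusion $I(G-N[v])\hookrightarrow I(G-v)$, since the closed star of $v$ in $I(G)$ is a cone over its link $I(G-N[v])$ and is glued along that link to the deletion $I(G-v)$. The associated long exact sequence
\[ \cdots\to \tilde{H}_j\big(I(G-v)\big)\to \tilde{H}_j\big(I(G)\big)\to \tilde{H}_{j-1}\big(I(G-N[v])\big)\to\cdots \]
bounds the homology of $I(G)$ by that of the two smaller complexes. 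Both $G-v$ and $G-N[v]$ are $(0,1)$-ternary with fewer vertices, hence contractible or spheres by \cite{Kim22}, and the induction hypothesis applies. Using $i(G-v)\ge i(G)$ I get $\tilde{H}_j(I(G-v))=0$ for $j\le i(G)-2$; using the elementary inequality $i(G-N[v])\ge i(G)-1$ (valid because adjoining $v$ to any minimum independent dominating set of $G-N[v]$ produces an independent dominating set of $G$) I get $\tilde{H}_{j-1}(I(G-N[v]))=0$ for $j\le i(G)-2$. The exact sequence then forces $\tilde{H}_j(I(G))=0$ for all $j\le i(G)-2$, closing the induction.

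The main obstacle is exactly this lower bound; once Lemma~\ref{lem:upper} and \cite[Theorem~1.2]{HW14} are available, the remaining three inequalities are bookkeeping. Inside the induction the delicate points are the two domination inequalities---$i(G-v)\ge i(G)$ is precisely Theorem~\ref{one for all}, which is where all the $(0,1)$-ternary structure developed in Sections~\ref{sec:ear-decomp}--\ref{sec:i(G)} is spent---and the bookkeeping of the degree shift in the mapping-cone sequence, including the boundary case $G-N[v]=\varnothing$ (where $I(G-N[v])$ is the $(-1)$-sphere and $i(G)=1$, so the ranges still match). It is essential that $G$ be $(0,1)$-ternary and not merely ternary: the inequality $d(G)\ge i(G)-1$ fails for general complexes---a triangulated annulus is homotopy equivalent to $S^1$ although all of its facets are $2$-dimensional---so Theorem~\ref{one for all}, which has no analogue for arbitrary ternary graphs, is what prevents such behaviour.
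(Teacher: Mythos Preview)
Your proof is correct and follows essentially the same route as the paper: the same chain of inequalities, the same reduction to proving $\tilde{H}_j(I(G))=0$ for $j\le i(G)-2$ by induction on $|V(G)|$, the same use of the long exact sequence at a vertex $v$ supplied by Theorem~\ref{one for all}, and the same elementary bound $i(G-N[v])\ge i(G)-1$. The only cosmetic differences are that you phrase the exact sequence via the mapping cone rather than Mayer--Vietoris (these coincide here since $I(G-N(v))$ is a cone) and that you treat the disconnected case explicitly via the join decomposition, whereas the paper leaves this implicit.
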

\addtocounter{theorem}{-1}
\endgroup
\begin{proof}
Suppose $G$ is a $(0,1)$-ternary graph where $I(G)$ is not contractible.

We first show $d(G) \geq i(G)-1$.
We proceed by induction on $n=|V(G)|$.
Suppose $G$ is a star where $v$ is the vertex that all other vertices are adjacent to $v$.
Clearly, $\{v\}$ dominates $G$, so we have $i(G)=1$.
On the other hand, $I(G)$ is the disjoint union of a simplex on $\{v\}$ and a simplex on $V(G)\setminus\{v\}$, so it is homotopy equivalent to a $0$-dimensional sphere.
Therefore, the statement is true when $G$ is a star.
When $n=2$, either $G$ is a star or $G$ consists of two isloated vertices.
In the latter case, $I(G)$ is contractible, so the statement holds for all graphs on at most $2$ vertices.
Let $n>2$ and suppose that the statement holds for all $(0,1)$-ternary graphs on less than $n$ vertices.
Let $G$ be a $(0,1)$-ternary graph on $n$ vertices that is not a star, and assume $I(G)$ is not contractible.

Take $v \in V(G)$. 
Note that $$I(G)=I(G-v) \cup I(G-N(v)), ~ I(G-v) \cap I(G-N(v))=I(G-N[v])$$ and $I(G-N(v))$ is contractible since it is a cone with apex $v$.
Thus we obtain the following exact sequence by the Mayer--Vietoris sequence:
\[ \cdots \to  \tilde{H}_i (I(G-N[v])) \to \tilde{H}_i(I(G-v)) \to \tilde{H}_i(I(G)) \to \tilde{H}_{i-1}(I(G-N[v])) \to \cdots.\]
By the induction hypothesis, we know that $\tilde{H}_{i}(I(G-v))=0$ for all $i \leq i(G-v)-2$ and $\tilde{H}_{i}(I(G-N[v]))=0$ for all $i \leq i(G-N[v])-2$.
Now by using the above exact sequence, to show $\tilde{H}_i(I(G))=0$ for all $i \leq i(G)-2$, it is enough to prove that $i(G-v) \geq i(G)$ and $i(G-N[v])+1 \geq i(G)$ for some vertex $v$ of $G$.
We can first observe that $i(G-N[v])+1 \geq i(G)$ holds for every vertex $v$ of $G$.
Let $W$ be an $i(G-N[v])$-dominating set. 
Then it is clear that $W \cup \{v\}$ is independent and dominating all vertices of $G$. Thus we obtain that $i(G) \leq i(G-N[v])+1$.
In addition, we can take a vertex $v$ of $G$ such that $i(G-v) \geq i(G)$ by Theorem~\ref{all for one}.
This completes the proof of the indequality $d(G)\geq i(G)-1$.

Now the inequality $d(G) \leq \gamma(L(G))-1$ follows from \cite[Theorem~1.2]{HW14}, so by Lemma~\ref{lem:upper}, we have $d(G)\leq \gamma(L(G))-1 \leq \gamma(G)-1 \leq i(G)-1$. 
It immediately follows that $d(G)+1=i(G)=\gamma(G)=\gamma(L(G))$.
\end{proof}

\subsection{$(0,1)$-ternary hypergraphs}\label{sec:hypergraphs}
Now we prove a hypergraph version of the main theorem.

For a hypergraph $H$ and an edge $e=\{v_1,\ldots,v_k\}$ of $H$, the hypergraph $H_e$ is obtained from $H$ by removing the edge $e$, introducing new vertices $w,u_1,\ldots,u_k$, and adding new edges $\{w,u_1\},\ldots,\{w,u_k\}$, $\{u_1,v_1\},\ldots,\{u_k,v_k\}$.
See Figure~\ref{fig:H_e} for an illustration.
\begin{figure}[htbp]
    \centering
    \includegraphics[scale=0.9]{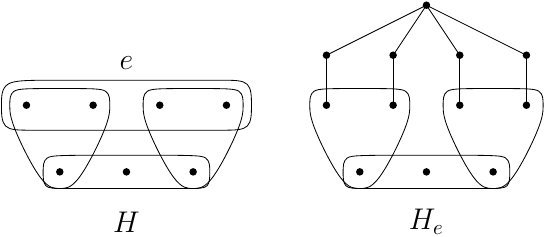}
    \caption{Constructing $H_e$ from a hypergraph $H$.}
    \label{fig:H_e}
\end{figure}
The following analogue of \cite[Theorem 11]{Cso09} was given in \cite{Kim24}.
Here, $\Sigma X$ is the suspension of the complex $X$.

\begin{theorem}\cite{Kim24}\label{thm:kim24}
    For a hypergraph $H$ and an edge $e$ of $H$, $I(H_e) \simeq \Sigma I(H)$. If a hypergraph $H$ has no Berge cycle of length $0$ modulo $3$, then neither does $H_e$.
\end{theorem}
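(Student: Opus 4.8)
The statement has two independent parts, and I would prove them separately; note that the homotopy equivalence $I(H_e)\simeq \Sigma I(H)$ needs no hypothesis on cycles, while the second assertion is the combinatorial one. For the homotopy equivalence the plan is to analyze $I(H_e)$ through the central vertex $w$ of the gadget, whose only edges are the pairs $\{w,u_i\}$. Writing $H-e$ for $(V(H),E(H)\setminus\{e\})$, a direct check of the independence condition gives $\mathrm{lk}_w I(H_e)=I(H-e)$ (a face avoiding every $u_i$ is independent in $H_e$ iff it is independent in $H-e$) and $\mathrm{del}_w I(H_e)=I(H_e-w)$. Since $\mathrm{st}_w I(H_e)$ is a cone, $I(H_e)$ is the homotopy cofiber of the inclusion $\iota\colon I(H-e)\hookrightarrow I(H_e-w)$ of the $u$-free faces, so everything reduces to understanding $I(H_e-w)$ together with $\iota$.

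The hypergraph $H_e-w$ is exactly $H-e$ with a pendant vertex $u_i$ attached to each $v_i\in e$ by the edge $\{u_i,v_i\}$. I would show that $I(H_e-w)$ is the mapping cone of the subcomplex inclusion $f\colon I(H)\hookrightarrow I(H-e)$, where $I(H)$ is obtained from $I(H-e)$ by deleting all faces that contain $e$ (the open star of $e$, when $e$ is a face). The identification is carried out by a discrete Morse matching on $I(H_e-w)$ that toggles the pendant $u_{i_0}$ with $i_0=\min\{i:v_i\notin S\}$; this matching is acyclic, and its critical cells are precisely the faces containing all of $e$, which is exactly what the extra cone contributes. Granting this, $\iota$ is identified with the canonical map $I(H-e)\to C_f$, and the Puppe cofiber sequence $I(H)\xrightarrow{f} I(H-e)\to I(H_e-w)\to \Sigma I(H)$ shows $\mathrm{cofib}(\iota)\simeq \Sigma I(H)$, hence $I(H_e)\simeq \Sigma I(H)$. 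The delicate point — and the main obstacle — is proving that $I(H_e-w)$ really is the mapping cone of $f$ and that $\iota$ matches the Puppe map; this is the hypergraph incarnation of Csorba's theorem, and the fussy part is the bookkeeping of the pendant cones over the $v_i$. (An alternative is to reduce to the graph statement of Csorba by expanding each hyperedge incidence into a $2$-element gadget and then folding.)

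For the second part I argue by contraposition, showing that a Berge cycle $C$ in $H_e$ of length $\equiv 0 \pmod 3$ produces one in $H$ of length $\equiv 0\pmod 3$. The new vertices have very restricted incidences: $w$ lies only in the edges $\{w,u_i\}$, and each $u_i$ lies only in $\{w,u_i\}$ and $\{u_i,v_i\}$. Hence if $C$ uses any new edge then it uses $w$: in a Berge cycle the two edges incident to a traversed vertex are forced for each $u_i$ (it has only two edges), so $C$ must run through the gadget along a single segment $v_i,\{u_i,v_i\},u_i,\{w,u_i\},w,\{w,u_j\},u_j,\{u_j,v_j\},v_j$ with $i\neq j$, and since $w$ occurs at most once this happens at most once. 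Replacing this four-edge segment by the single step $v_i,e,v_j$ — legal because $v_i,v_j\in e$ and $e\in E(H)$ is distinct from the edges of $E(H)\setminus\{e\}$ that $C$ already uses — yields a genuine Berge cycle $C'$ of $H$ with $\ell(C')=\ell(C)-3\geq 2$. If instead $C$ uses no new edge, it is already a Berge cycle of $H$ of the same length. In either case $\ell(C')\equiv \ell(C)\pmod 3$, so $C'$ has length $\equiv 0\pmod 3$, contradicting the hypothesis on $H$.

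I expect the Berge-cycle part to be routine once the incidence restrictions are spelled out, since the gadget-contraction only ever changes the length by a multiple of $3$. The real work lies in the second paragraph, namely the mapping-cone identification of $I(H_e-w)$ and its compatibility with the Puppe sequence, which is exactly the suspension phenomenon recorded in \cite{Kim24}.
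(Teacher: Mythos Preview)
The paper does not give its own proof of Theorem~\ref{thm:kim24}; the result is quoted from~\cite{Kim24}. So for the homotopy statement there is nothing in this paper to compare your approach against. Your Berge-cycle argument for the second assertion is, however, essentially the same as the paper's proof of the closely related Lemma~\ref{lem:hyperternary} (the $(0,1)$-ternary analogue): one observes that any use of the gadget in a Berge cycle forces the four-edge segment $v_a\,\{u_a,v_a\}\,u_a\,\{w,u_a\}\,w\,\{w,u_b\}\,u_b\,\{u_b,v_b\}\,v_b$, and replaces it by the single step $v_a\, e\, v_b$, shortening the cycle by exactly~$3$. That part is correct and complete.

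For the suspension $I(H_e)\simeq \Sigma I(H)$, your outline is plausible but has a small slip and, as you yourself flag, a real gap. The slip: your Morse matching on $I(H_e-w)$ does not have critical cells \emph{only} among the faces containing $e$. Your rule would pair $\{u_1\}$ with $\emptyset$, which is not a simplex, so $\{u_1\}$ is also critical; indeed, any acyclic matching on a nonempty complex must leave at least one critical vertex, whereas the faces containing $e$ all have dimension at least $|e|-1$. This is harmless for the intended conclusion (the mapping cone needs a base point too), but the stated cell census is off by one. The substantive gap is the one you name: a Morse matching yields a CW model with the right number of cells in each dimension, but it does not by itself identify the attaching maps, and it certainly does not show that the inclusion $\iota\colon I(H-e)\hookrightarrow I(H_e-w)$ corresponds, under the equivalence, to the canonical map $I(H-e)\to C_f$. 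That compatibility is precisely what is needed for the Puppe step, and it is the actual content of the theorem. Your parenthetical suggestion of reducing to Csorba's graph result is a more promising line, but as written the first half remains an outline rather than a proof.
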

Theorem~\ref{thm:kim24} implies that if a hypergraph $H$ has no Berge cycle of length $0$ modulo $3$ and $d(H)\neq *$, then $d(H_e)=d(H)+1$.
Also, the second statement of Theorem~\ref{thm:kim24} can be extended for the hypergraph property of being $(0,1)$-ternary.
\begin{lemma}\label{lem:hyperternary}
    If $H$ is a $(0,1)$-ternary hypergraph, then so is $H_e$.
\end{lemma}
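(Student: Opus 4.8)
The plan is to mirror the argument behind the second statement of Theorem~\ref{thm:kim24}, which handles the $0$-modulo-$3$ case, but to track lengths modulo $3$ directly. Writing $e=\{v_1,\ldots,v_k\}$, recall that $H_e$ replaces $e$ by the gadget consisting of the new vertex $w$, the new vertices $u_1,\ldots,u_k$, and the edges $\{w,u_i\}$ and $\{u_i,v_i\}$. The crucial structural observation is that each $u_i$ has exactly two incident edges in $H_e$, namely $\{w,u_i\}$ and $\{u_i,v_i\}$, while $w$ is incident only to $\{w,u_1\},\ldots,\{w,u_k\}$. I would use this to classify every Berge cycle $C$ of $H_e$ according to whether $w$ is one of its vertices.

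First I would dispose of the case $w\notin V(C)$. Since using an edge $\{w,u_i\}$ in a Berge cycle forces $w$ to be a vertex of that cycle, no such edge appears in $C$; and then no $u_i$ can appear in $C$ either, because its only remaining incident edge is $\{u_i,v_i\}$, which alone cannot supply the two distinct cycle-edges that a Berge-cycle vertex requires. Hence $C$ uses only edges of $H-e$, so it is a Berge cycle of $H$, and the hypothesis that $H$ is $(0,1)$-ternary gives $\ell(C)\not\equiv 0,1\pmod 3$.

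Next I would treat the case $w\in V(C)$. Here $w$ sits between two distinct incident edges $\{w,u_i\}$ and $\{w,u_j\}$ with $i\neq j$, and since $u_i$ and $u_j$ each have degree $2$, the cycle is forced to enter and leave through $\{u_i,v_i\}$ and $\{u_j,v_j\}$; moreover no further $u_\ell$ can occur, as $w$ already carries its two cycle-edges. Thus $C$ traverses the gadget exactly once along the length-$4$ detour $v_i,\{u_i,v_i\},u_i,\{w,u_i\},w,\{w,u_j\},u_j,\{u_j,v_j\},v_j$, and the remainder of $C$ is a Berge path from $v_j$ to $v_i$ inside $H-e$. Replacing this detour by the single edge $e$ (which contains both $v_i$ and $v_j$) yields a genuine Berge cycle $C'$ of $H$, since the discarded vertices $w,u_i,u_j$ are all new and the surviving vertices and edges remain distinct. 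As the detour uses $4$ edges while $e$ uses $1$, we get $\ell(C)=\ell(C')+3$, so $\ell(C)\equiv\ell(C')\pmod 3$; because $H$ is $(0,1)$-ternary, $\ell(C')\not\equiv 0,1\pmod 3$, and hence the same holds for $\ell(C)$.

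Combining the two cases shows that $H_e$ has no Berge cycle of length $0$ or $1$ modulo $3$, i.e.\ $H_e$ is $(0,1)$-ternary. I expect the only delicate point to be the rigorous justification that a Berge cycle through $w$ must traverse the gadget in exactly the claimed length-$4$ pattern and cannot touch any other gadget vertex; this is precisely where the degree-$2$ property of the $u_i$ and the single appearance of $w$ are used, and it is the same structural fact underlying the $0$-modulo-$3$ statement of Theorem~\ref{thm:kim24}.
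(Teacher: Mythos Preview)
Your proof is correct and follows essentially the same approach as the paper: both arguments show that a Berge cycle of $H_e$ either avoids all gadget edges (and is already a Berge cycle of $H$) or passes through exactly the four consecutive gadget edges $\{v_i,u_i\},\{u_i,w\},\{w,u_j\},\{u_j,v_j\}$, which can be replaced by the single edge $e$ to obtain a Berge cycle of $H$ whose length is $3$ less. The paper phrases this by contradiction and asserts the ``exactly four consecutive edges'' structure directly, whereas you organize the case split around whether $w\in V(C)$ and supply the degree-$2$ justification explicitly, but the content is the same.
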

\begin{proof}
    Suppose $H_e$ is not $(0,1)$-ternary, that is, $H_e$ contains a Berge cycle $C$ of length $0$ or $1$ modulo $3$.
    Let $e=\{v_1,\ldots,v_k\}$ and let $C=x_1 e_1 x_2 e_2 \ldots x_m e_m$ where $x_1,\ldots,x_m$ are distinct vertices and $e_1,\ldots,e_m$ are distinct edges of $H_e$.
    If all edges of $C$ already exist in $H$, then $C$ is a Berge cycle of $H$, which is a contradiction to the assumption that $H$ is $(0,1)$-ternary.
    So we may assume that $C$ contains an edge in $H_e \setminus H$.
    Since $$H_e \setminus H=\{\{w,u_1\},\ldots,\{w,u_k\}, \{u_1,v_1\},\ldots,\{u_k,v_k\}\},$$
    $C$ must include exactly $4$ consecutive edges in $H_e \setminus H$.
    Without loss of generality, assume $e_1=\{u_1,v_1\}$, $e_2=\{w,u_1\}$, $e_3=\{w,u_2\}$, $e_4=\{u_2,v_2\}$ are the edges of $H_e\setminus H$ that are used in $C$.
    Then we have $x_1=v_1$, $x_2=u_1$, $x_3=w$, $x_4=u_2$, $x_5=v_2$.
    Now the Berge cycle $C'=x_1 e x_5 e_5 x_6 e_6 \ldots x_m e_m$ in $H$ has length $m-3 \equiv m \equiv 0\text{ or }1~(\text{mod }3)$, which is a contradiction.
    Therefore, $H_e$ must be a $(0,1)$-ternary hypergraph.
\end{proof}
Based on the statements above, we prove Theorem~\ref{thm:hypergraph}.
\begingroup
\def\thetheorem{\ref{thm:hypergraph}}
\begin{theorem}
For every $(0,1)$-ternary hypergraph $H$, either $d(H)=*$ or $d(H)=\gamma(H)-1$.
\end{theorem}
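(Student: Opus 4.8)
The plan is to deduce the hypergraph statement from the graph statement (Theorem~\ref{thm:main}) by repeatedly applying the edge-expansion $H\mapsto H_e$, which trades a single hyperedge for a gadget consisting only of size-two edges while changing both $d$ and $\gamma$ in a controlled way. I would induct on the number of edges of $H$ of size different from $2$. In the base case every edge has size exactly $2$, so $H$ is a simple graph; since a Berge cycle in a $2$-uniform hypergraph is exactly an ordinary cycle, $H$ is then a $(0,1)$-ternary graph and Theorem~\ref{thm:main} gives at once that either $d(H)=*$ or $d(H)=\gamma(H)-1$. (Isolated vertices cause no trouble: they make $I(H)$ a cone, so $d(H)=*$.)

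For the inductive step I pick an edge $e=\{v_1,\dots,v_k\}$ with $k\neq 2$ and pass to $H_e$, whose new vertices I denote $w,u_1,\dots,u_k$. Since the operation deletes $e$ and inserts only size-two edges, $H_e$ has one fewer edge of size $\neq 2$, and by Lemma~\ref{lem:hyperternary} it is still $(0,1)$-ternary, so the induction hypothesis applies to it. On the topological side, Theorem~\ref{thm:kim24} gives $I(H_e)\simeq\Sigma I(H)$, hence $d(H)=*$ if and only if $d(H_e)=*$, and otherwise $d(H_e)=d(H)+1$. If $d(H)=*$ we are done. If $d(H)\neq *$, then $d(H_e)=d(H)+1\neq *$, so the induction hypothesis yields $d(H_e)=\gamma(H_e)-1$; using the domination identity below, $d(H)+1=d(H_e)=\gamma(H_e)-1=\gamma(H)$, that is $d(H)=\gamma(H)-1$.

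Everything thus rests on the combinatorial identity $\gamma(H_e)=\gamma(H)+1$, which I would establish by two matching inequalities. For $\gamma(H_e)\le\gamma(H)+1$, I start from a minimum dominating set $D$ of $H$ and note that at most one index $i_0$ can have $v_{i_0}$ dominated by $D$ \emph{only} through $e$ (two such indices would force some $v_{i'}$ to be simultaneously inside and outside $D$); then $D\cup\{w\}$, or $D\cup\{u_{i_0}\}$ when such an $i_0$ exists, is checked to dominate $H_e$. For $\gamma(H_e)\ge\gamma(H)+1$, I take a minimum dominating set $D'$ of $H_e$, split it as $A=D'\cap\{w,u_1,\dots,u_k\}$ and $B=D'\cap V(H)$, and let $I$ be the set of indices $i$ with $v_i$ not dominated by $B$ in $H$. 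Since $w$ must be dominated we have $|A|\ge 1$, and each $i\in I$ forces $u_i\in A$, so $|I|\le|A|$; when $I=\varnothing$ or $|A|>|I|$ the set $B\cup\{v_i:i\in I\}$ already dominates $H$ with at most $|D'|-1$ vertices.

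I expect the only real obstacle to be the tight case $|A|=|I|$ of the lower bound, the one spot where the edge $e$ must be used essentially. Here $A=\{u_i:i\in I\}$ and $w\notin D'$, so the naive repair gains nothing. The key point is that for $j\notin I$ the vertex $u_j$ has $u_j\notin D'$ and $w\notin D'$, so $u_j$ can only be dominated through $v_j$, forcing $v_j\in B$. Consequently, for any fixed $i_0\in I$ one has $e\setminus\{v_{i_0}\}\subseteq B\cup\{v_i:i\in I\setminus\{i_0\}\}$, so this last vertex $v_{i_0}$ is dominated through $e$ itself; hence $D=B\cup\{v_i:i\in I\setminus\{i_0\}\}$ dominates $H$ with $|D|=|D'|-1$. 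The same bookkeeping also settles singleton edges, where the relevant vertex is dominated through $e$ automatically and $I=\varnothing$, so they are expanded along with the larger edges and the induction reaches a genuine graph.
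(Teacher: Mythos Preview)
Your proposal is correct and follows essentially the same approach as the paper: reduce to the graph case by repeatedly applying the expansion $H\mapsto H_e$, invoking Theorem~\ref{thm:kim24} and Lemma~\ref{lem:hyperternary}, and the heart of the argument is the identity $\gamma(H_e)=\gamma(H)+1$, which you prove by the same two-sided comparison as the paper (your split by ``dominated only through $e$'' versus the paper's split by $|W\cap e|$, and your $|A|$ versus $|I|$ bookkeeping versus the paper's $w\in S$/$w\notin S$ cases, are just cosmetic rephrasings). The only difference is that the paper discards singleton edges outright (they affect neither $I(H)$ nor $\gamma(H)$) while you expand them along with the larger edges, which also works.
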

\addtocounter{theorem}{-1}
\endgroup
\begin{proof}
    By Theorem~\ref{thm:kim24}, it is sufficient to show that $\gamma(H_e)=\gamma(H)+1$.
    
    Let $e=\{v_1,\ldots,v_k\}$.
    To prove $\gamma(H_e) \geq \gamma(H)+1$ by contrary, assume that $S$ is a minimum dominating set of $H_e$ with $|S| \leq \gamma(H)$.
    Suppose $w\in S$. If $u_i \in S$ for some $i$, we replace $u_i$ with $v_i$.
    Since $u_i$ affects only to $w$ and $v_i$ while $w$ is already in $S$, the set $S$ after this replacement still dominates $H_e$.
    If $v_i$ was in the original $S$, then the replacement reduces the size of the set, which is a contradiction to the minimality.
    By repeating the argument, we may assume that $S \cap \{u_1,\ldots,u_k\}=\varnothing$.
    Then it is clear that $S\setminus\{w\}$ is a dominating set of $H$ of size $|S|-1 < \gamma(H)$, which is a contradiction. 
    Thus we may assume $w\notin S$.

    Note that $u_i$ can be dominated by only $u_i$ or $v_i$ in $H_e$.
    Note also that some $u_i$ must be included in $S$ to dominate $w$.
    By relabeling the vertices $u_1,\ldots,u_m$ and $v_1,\ldots,v_m$ if necessary, we may assume that $S$ contains $u_1,\ldots,u_m, v_{m+1},\ldots,v_k$ for some $1 \leq m \leq k$.
    For $u_i \in \{u_2,\ldots,u_m\}$, we may assume $v_i\notin S$.
    Otherwise, $u_i$ takes no role in $S$, because $u_i$ is dominated by $v_i$ and $w$ is dominated by $u_1$.
    Now we construct $S'$ from $S$ by replacing each $u_i \in \{u_2,\ldots,u_m\}$ with $v_i$ and remove $u_1$.
    Since $v_1$ is dominated by $\{v_2,\ldots,v_k\}\subset S'$ in $H$, the set $S'$ is a dominating set of $H$.
    However, we have $|S'|=|S|-1<\gamma(H)$, which is a contradiction.
    This shows $\gamma(H_e)\geq \gamma(H)+1$.

    Next we show $\gamma(H_e) \leq \gamma(H)+1$.
    Take a dominating set $W$ of $H$ with $|W|=\gamma(H)$.
    We divide into the following two cases.

    \noindent {\bf Case 1.} Suppose $|W \cap e|\neq k-1$. Let $W'= W\cup\{w\}$. Since $W \subset W'$ and $|W\cap e|\neq k-1$, all vertices of $H$ are dominated by $W'$ in $H_e$ as well.
    Since $w$ dominates $\{w,u_1,\ldots,u_k\}$, the set $W'$ dominates $H_e$.

    \noindent {\bf Case 2.} $|W \cap e|=k-1$. 
    Without loss of generality, we may assume that $v_1\notin W$. Let $W'=W\cup\{u_1\}$.
    Then all vertices of $H$ possibly except $v_1$ are dominated by $W$ in $H_e$ as well.
    Also, each $u_i\in\{u_2,\ldots,u_k\}$ is dominated by $v_i\in W$.
    Finally, $u_1$ dominates $\{w,u_1,v_1\}$.
    This shows that $W'$ dominates $H_e$.
    
    In any case, we have $\gamma(H_e)\leq |W'|=\gamma(H)+1$, as desired.

    To complete the proof, we repeatedly apply Theorem~\ref{thm:kim24} and the above argument for all edges of $H$ with size at least $3$.
    Let $G$ be the resulting hypergraph after we apply the process for all edges of $H$ with size at least $3$.
    Note that we can assume $H$ has no edge of size $1$, since it does not affect to $I(H)$ and $\gamma(H)$.
    Hence $G$ is a $(0,1)$-ternary graph by Lemma~\ref{lem:hyperternary}.
    If $H$ has $k$ edges of size at least $3$, then we have $\gamma(G) = \gamma(H)+k$ by the above argument and $I(G)\simeq \Sigma^k I(H)$ by Theorem~\ref{thm:kim24}.

    If $d(H)\neq*$, then it must be $d(G)\neq*$, and it follows from Theorem~\ref{thm:main} that $\gamma(H)+k=\gamma(G)=d(G)+1=d(H)+k+1$.
    Therefore, we have $d(H) = \gamma(H)-1$.
\end{proof}

\section{Remark}\label{sec:remark}
In general, for a ternary graph $G$, the value $d(G)+1$ may be different from any of $i(G)$, $\gamma(G)$, and $\gamma(L(G))$.
One simple example is $C_4$.
Obviously, $I(C_4)$ consists of two disjoint edges, so $d(C_4)=0$. On the other hand, $i(C_4)=\gamma(C_4)=\gamma(L(C_4))=2 \neq d(C_4)+1$.
Here, we give two more such examples.
Computing $d(G)$ in both examples are based on an observation that $I(G) \simeq I(G-u)$ if $N(u) \supset N(v)$ for some non-adjacent pair $u,v\in V(G)$.
This is not difficult to prove, and one can refer to \cite[Lemma 3.2]{Eng09}.
\begin{example}
    Consider the graph $A_k$ obtained from $P_{3k}$ by replacing each edge with an induced $C_4$. 
    See Figure~\ref{fig:A_1} for an illustration when $k=1$.
    \begin{figure}[htbp]
        \centering
        \includegraphics[scale=0.9]{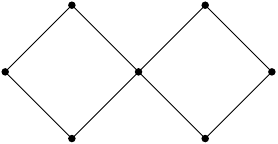}
        \caption{The graph $A_1$.}
        \label{fig:A_1}
    \end{figure}
    Then $I(A_k) \simeq I(P_{6k-1}) \simeq S^{2k-1}$, so $d(A_k) = 2k-1$.
    On the other hand, since each $C_4$ requires two vertices to be dominated, the most efficient way to dominate $A_k$ is to take the vertices that are shared by two $C_4$'s of $A_k$.
    There are $3k-2$ many such vertices in $A_k$, and we need $2$ more vertices in the first and the last $C_4$, so $\gamma(A_k)=i(A_k)=3k$.
    This shows that $d(G)$ and $\gamma(G)$, $i(G)$ may have an arbitrarily large gap.
\end{example}
\begin{example}
    Let $B_0$ be the graph obtained from $P_8$ by replacing each of the first, $4$th, and $7$th edge with an induced $C_4$.
    See Figure~\ref{fig:B_0} for an illustration of the graph.
    \begin{figure}[htbp]
        \centering
        \includegraphics[scale=0.9]{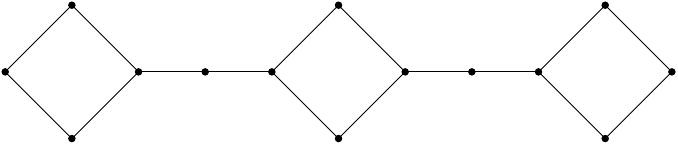}
        \caption{The graph $B_0$.}
        \label{fig:B_0}
    \end{figure}
    Then $I(B_0) \simeq I(P_{11})\simeq S^3$, so $d(B_0) = 3$.
    On the other hand, it is easy to see that $\gamma(B_0)=5$ and $\gamma(L(B_0)) = 6$.
    This not only shows that $d(G)$, $\gamma(G)$, and $\gamma(L(G))$ may be all distinct, but also implies that Lemma~\ref{lem:upper} may not hold if we allow cycles of length $1$ modulo $3$.

    We can also generalize this example to show that $d(G)$ and $\gamma(L(G))$ can have an arbitrarily large gap.
    Let $B_k$ is the graph obtained from $P_{9k+8}$ by replacing each of $(3t+1)$-st edge with an induced $C_4$.
    Then $I(B_k) \simeq I(P_{12k+11}) \simeq S^{4k+3}$, showing that $d(G)=4k+3$.
    On the other hand, we need at least $2$ edges to meet all edges of a $C_4$.
    Since the $C_4$'s in $B_k$ are far enough apart so that any edge that meets an edge in one copy of $C_4$ does not meet any edge of another copy of $C_4$.
    Noting that there are $3k+3$ many copies of $C_4$ in $B_k$, we have $\gamma(L(B_k)) \geq 6k+6$.
    On the other hand, construct an edge $F$ subset by taking a matching of size $2$ from each copy of $C_4$ in $B_k$.
    Then $|F| = 6k+6$ and $F$ meets all edges of $B_k$.
    This shows $\gamma(L(B_k)) = 6k+6$.
\end{example}
A natural problem that arises from the above observations is to find tight bounds on $d(G)$ for a ternary graph $G$ in terms of the domination numbers of $G$.

\subsection*{Funding}
T.~Eom and J.~Kim were supported by Global--Learning \& Academic research institution for Master's, PhD students, and Postdocs (LAMP) Program (RS-2024-00442775) of the National Research Foundation of Korea(NRF), funded by the Ministry of Education.

J.~Kim was also supported by the Institute for Basic Science (IBS-R029-C1) and by a grant(RS-2025-00558178), funded by the Ministry of Science and ICT(MSIT) of the Korean Government.

M.~Kim was supported by a grant(RS-2025-00555526) from the National Research Foundation of Korea(NRF), funded by the Ministry of Science and ICT(MSIT) of the Korean Government.

\end{document}